\documentclass[12pt, reqno]{amsart}

\author[M.~Balcerzak]{Marek Balcerzak}
\address{Institute of Mathematics, Lodz University of Technology, al. Politechniki 8, 93-590 Lodz, Poland}
\email{marek.balcerzak@p.lodz.pl}
\author[P.~Leonetti]{Paolo Leonetti}
\address{Department of Economics, Universit\`a degli Studi dell'Insubria, via Monte Generoso 71, 21100 Varese, Italy}
\email{leonetti.paolo@gmail.com}

\keywords{Baire property; meager sets; translates of sets; topological zero-one laws; characteristic functions; Banach spaces.}

\subjclass[2020]{Primary: 54H05. Secondary: 26A21, 28A05, 22A05, 46B20.}

\title{Linear relations modulo meager sets}

\usepackage[T1]{fontenc}
\usepackage{amsmath}
\usepackage{amssymb}
\usepackage{amsthm}
\usepackage[left=3.5cm, right=3.5cm]{geometry}
\usepackage{hyperref}
\usepackage{fancyhdr}
\usepackage{enumitem}
\usepackage{comment}
\usepackage{nicefrac}
\usepackage{comment}
\usepackage{bm}
\usepackage{mathrsfs}
\usepackage{graphicx}
\usepackage[utf8]{inputenc}
\usepackage{cancel}
\usepackage{mathtools}

\usepackage{tikz}
\usetikzlibrary{decorations.pathreplacing,matrix,arrows,positioning,automata,shapes,shadows,calc,fadings,decorations,snakes,through,intersections}

\AtBeginDocument{%
   \def\MR#1{}
}

\newtheorem{thm}{Theorem}[section]
\newtheorem{cor}[thm]{Corollary}
\newtheorem{lem}[thm]{Lemma}
\newtheorem{prop}[thm]{Proposition}

\theoremstyle{definition} 
\newtheorem{defi}[thm]{Definition}
\let\olddefi\defi
\renewcommand{\defi}{\olddefi\normalfont}
\newtheorem{question}{Question}
\let\oldquestion\question
\renewcommand{\question}{\oldquestion\normalfont}
\newtheorem{example}[thm]{Example}
\let\oldexample\example
\renewcommand{\example}{\oldexample\normalfont}
\newtheorem{rmk}[thm]{Remark}
\let\oldrmk\rmk
\renewcommand{\rmk}{\oldrmk\normalfont}

\hypersetup{
    pdftitle={Linear relations modulo meager sets},
    pdfauthor={Balcerzak and Leonetti},
    pdfmenubar=false,
    pdffitwindow=true,
    pdfstartview=FitH,
    colorlinks=true,
    linkcolor=blue,
    citecolor=green,
    urlcolor=cyan
}

\providecommand{\MR}[1]{}

\providecommand{\MR}{\relax\ifhmode\unskip\space\fi MR }

\begin{document}

\maketitle
\thispagestyle{empty}

\begin{abstract}
We prove several topological zero-one laws. First, we show that, if a subset $A$ of a Banach space $X$ has the Baire property, then $A$ admits a somewhere dense set of vectors $x \in X$ such that $A+x$ agrees with $A$ modulo meager sets if and only if it is either meager or comeager. Additional equivalent conditions are given if $X=\mathbb{R}$. 

Second, we prove that if $A_1,\ldots,A_k\subseteq \mathbb{R}$ are subsets with the Baire property, $\alpha_1,\ldots,\alpha_k$ are nonzero reals with distinct finite sums, and $(x_{n,i}: n\in \omega)$ are injective real sequences which converge to $0$ for each $i=1,\ldots,k$, then  
$$
\sum_{i=1}^k \alpha_i(\bm{1}_{A_i+x_{n,i}}-\bm{1}_{A_i})=0
$$
modulo meager sets for all $n \in \omega$ if and only if each $A_i$ is either meager or comeager. 
Finally, we provide some additional results in the case where the $\alpha_1,\ldots,\alpha_k$ do not have distinct finite sums. For instance, if $k=2$ and $\alpha_1=\alpha_2$, then the above equality holds modulo meager sets for all $n \in \omega$ if and only if each $A_i$ is either meager or comeager or if $\{A_1,A_2\}$ is a partition of $\mathbb{R}$ modulo meager sets. Additional characterizations are given in the case $k\ge 3$. These results yield the category analogues of several results by Fejzi\'{c}, Freiling, and Rinne in [J. London Math. Soc.~\textbf{82} (2010), no. 3, 717--732]. 
\end{abstract}


\section{Introduction and Main results}\label{sec:intro}

It is known that a measurable set $A\subseteq \mathbb{R}$ has arbitrarily small periods (that is, $A+x_n=A$ for all $n\in \omega$, where $(x_n)$ stands for a positive real sequence with limit $0$) if and only if either $A$ or its complement has measure $0$, see Exercise 3 in Rudin's book \cite[p.156]{MR0924157}. (Note that $A+x_n=A$ can be rewritten equivalently as $\bm{1}_{A+x_n}-\bm{1}_A=0$, where $\bm{1}_S$ stands for the characteristic function of a set $S$.) 

Motivated by this fact, Fast, Fejzi\'{c}, Freiling, and Rinne studied in \cite{MR2083818, MR2458270, MR2739064} the analogous situation of measurable sets $A_1,\ldots,A_k\subseteq \mathbb{R}$ which satisfy  
\begin{equation}\label{eq:measurablecase}
\forall n \in \omega, \quad \sum_{i=1}^k \alpha_i(\bm{1}_{A_i+x_{n,i}}-\bm{1}_{A_i})=0 \,\,\,\text{ a.e.}, 
\end{equation}
(that is, modulo sets of measure zero), where $\alpha_1,\ldots,\alpha_k$ are given nonzero reals, and $(x_{n,i}: n \in \omega)$ is an injective real sequence with limit $0$ for each $i=1,\ldots,k$. 

In this manuscript, we provide several category analogues of the above results, thereby answering an open question in \cite{MR2458270}. We divide our work into two main parts.
\begin{enumerate}[label={\rm (\roman{*})}]
\item In the first part, given a Banach space $X$, we provide necessary and sufficient conditions on a set $A\subseteq X$ with the Baire property to admit a somewhere dense set of vectors $x \in X$ for which $A+x=A$ modulo meager sets, see Theorem \ref{thm:maincategorycaseone}. It turns out that this happens if and only if either $A$ or its complement is meager, see Theorem \ref{thm:maincategorycaseone}. Additional equivalent conditions in the one-dimensional case $X=\mathbb{R}$ can be found in Corollary \ref{cor:onedimensional}. These results can be seen as topological zero-one laws, in the same spirit of \cite[Theorem 8.46]{K}, cf. also \cite{MR969916, MR507257, MR345084}. 

\item In the second part, we specialize to the case $X=\mathbb{R}$, and characterize the category analogue of \eqref{eq:measurablecase} for sets $A_1,\ldots,A_k$ with the Baire property. More precisely, we prove that if all finite sums of the $\alpha_j$ are distinct, then each set $A_i$ has to be either meager or comeager, see Theorem \ref{thm:distinctFS}. In addition, if $k=2$ and $\alpha_1=\alpha_2$, then at least one of the following two conditions holds: (a) each $A_i$ is either meager or comeager, or (b) $A_1=\mathbb{R}\setminus A_2$ modulo meager sets, see Theorem \ref{thm:twodimensional}. Additional results are given in the general case $k\ge 3$, see Theorem \ref{thm:generalcase} and Proposition \ref{prop:equivalentE1E5}. 
\end{enumerate}

We present our main results in Section \ref{sec:mainresults}. Their proofs are given in Section \ref{sec:proofs}. We remark that our techniques are completely different from the measure counterparts given in \cite{MR2083818, MR2458270, MR2739064}. 

\section{Main Results}\label{sec:mainresults}

Given subsets $A,B$ of a topological group $X$ (written additively, with neutral element $0$), 
we write 
$
A+B:=\{x+y: x \in A, y \in B\},
$ 
and we shorten $A+x:=A+\{x\}$ for each $x \in X$. Let also $A^\prime$ be the set of accumulation points of $A$, and $\partial A$ the boundary points of $A$. Recall that $A$ is nowhere dense if its closure has empty interior; in the opposite, $A$ is said to be somewhere dense. In addition, $A$ is meager if it is a countable union of nowhere dense sets. The family of meager subsets of $X$ is denoted by $\mathscr{M}$. We also write $A\sim B$ if the symmetric difference $A\bigtriangleup B$ is meager. For each $A\subseteq X$, define
$$
\mathcal{D}(A):=\left\{x \in X: A+x \sim A\,\right\}.
$$ 
In other words, $\mathcal{D}(A)$ is the 
symmetric group generated by the equivalence classes of elements of $A$ in the quotient space $X/\sim$, 
cf. e.g. \cite{Reiher2024}.

\subsection{One set in Banach spaces.} 
Our first main result is a higher dimensional category analogue of \cite[Theorem 1]{MR2083818}, cf. also \cite[p. 156]{MR2739064}; in addition, it answers an open question in \cite{MR2458270}.
\begin{thm}\label{thm:maincategorycaseone}
    Let $X$ be a Banach space and fix a subset $A\subseteq X$ with the Baire property. Then the following are equivalent\textup{:}
    \begin{enumerate}
    [label=\textup{(}\textsc{A}\arabic*\textup{)}]%
    \item \label{item:01firstthm} $A\sim \emptyset$ or $A\sim X$\textup{;}

    \item \label{item:02firstthm} $\mathcal{D}(A)=X$\textup{;}

\item \label{item:03firstthm} $\mathcal{D}(A)$ is somewhere dense\textup{.} 
    \end{enumerate}
Hence, either $\mathcal{D}(A)=X$ or $\mathcal{D}(A)$ is nowhere dense. 
If, in addition, $X$ is one-dimensional, then each of the above conditions is also equivalent to each of the next ones\textup{:}
\begin{enumerate}
    [label=\textup{(}\textsc{A}\arabic*\textup{)}]%
\setcounter{enumi}{3}
    \item \label{item:04firstthm} $0 \in \mathcal{D}(A)^\prime$\textup{;}
    \item \label{item:05firstthm} $\mathcal{D}(A)^\prime \neq \emptyset$\textup{;}
    \item \label{item:06firstthm} $\mathcal{D}(A)\neq \{nx: n \in \mathbb{Z}\}$ for all $x \in X$\textup{.}
\end{enumerate}
\end{thm}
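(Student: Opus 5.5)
The implications \ref{item:01firstthm}$\Rightarrow$\ref{item:02firstthm}$\Rightarrow$\ref{item:03firstthm} are immediate, since translates of meager sets are meager. So the substance is \ref{item:03firstthm}$\Rightarrow$\ref{item:01firstthm}, and I would argue it by contraposition.

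Suppose $A$ has the Baire property and is neither meager nor comeager. Then $A\sim U$ for some open set $U$, and $X\setminus A\sim V$ for some open set $V$, where $U,V$ are nonempty and $U\cap V=\emptyset$ (one can take $V$ to be the interior of $X\setminus \overline U$). Fix balls $B(a,r)\subseteq U$ and $B(b,r)\subseteq V$. The key observation is that if $x\in\mathcal{D}(A)$, then $(U+x)\bigtriangleup U$ is meager. Since this set is open modulo the closed nowhere dense sets $\partial U$ and $\partial U+x$, the open set $(U+x)\setminus\overline U$ is meager, hence empty by the Baire category theorem in the complete space $X$. Thus $U+x\subseteq \overline U$, and symmetrically $U-x\subseteq\overline U$.

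To show $\mathcal{D}(A)$ is nowhere dense, I fix a nonempty open $W$ and seek an open $W'\subseteq W$ disjoint from $\mathcal{D}(A)$. I would first prove that $\mathcal{D}(A)$ is a subgroup; this follows from the fact that $\sim$ is preserved under translation. Then, if $\mathcal{D}(A)$ were dense in some ball $B(c,\varepsilon)$, the differences of its elements would be dense in $B(0,\varepsilon)$. So $\mathcal{D}(A)$ would be dense near $0$, and hence, being a group, dense in all of $X$. Now choose $x\in\mathcal{D}(A)$ with $\|x-(b-a)\|<r$. Then $a+x\in B(b,r)\subseteq V$, which is open and disjoint from $\overline U$, contradicting $U+x\subseteq\overline U$. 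Therefore $\mathcal{D}(A)$ is dense in no ball, i.e. it is nowhere dense. This gives \ref{item:03firstthm}$\Rightarrow$\ref{item:01firstthm} and the dichotomy. The main care needed is the step that turns ``somewhere dense'' into ``dense near $0$'': $\overline{\mathcal D(A)}$ contains a ball, and the difference set of that ball is a ball around $0$ contained in the closed subgroup $\overline{\mathcal{D}(A)}$.

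For $X=\mathbb{R}$, the implications \ref{item:02firstthm}$\Rightarrow$\ref{item:04firstthm}$\Rightarrow$\ref{item:05firstthm} are trivial. For \ref{item:05firstthm}$\Rightarrow$\ref{item:02firstthm}, note that $\mathcal D(A)$ is a subgroup of $\mathbb{R}$ with an accumulation point, so it is not discrete, hence dense in $\mathbb{R}$. In particular it is somewhere dense, and \ref{item:03firstthm} gives \ref{item:02firstthm}. For \ref{item:06firstthm}, recall that every subgroup of $\mathbb{R}$ is either cyclic $\{nx\}$ (including $x=0$) or dense. So \ref{item:06firstthm} is equivalent to density, which is equivalent to \ref{item:03firstthm}.
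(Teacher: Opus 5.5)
Your proof is correct, but for the core implication (A3)$\Rightarrow$(A1) it takes a different route from the paper.

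\textbf{The paper's route.} The paper argues locally. After translating so that $\mathcal{D}(A)$ is dense in a small ball $B(0,r_0)$, it takes an essentially maximal ball $B(x_1,r_1)\subseteq A^c$ and a point $y_\delta\in A$ almost touching it. This produces a point of $A$ and a small ball $B(z_\delta,\delta r_1)\subseteq A^c$ whose centres are less than $r_0$ apart. A single $d_\delta\in\mathcal{D}(A)$ close to $z_\delta-y_\delta$ then pushes a small ball of $A$ into $A^c$.

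\textbf{Your route.} You globalize first. The closure $\overline{\mathcal{D}(A)}$ is a subgroup containing a ball around $0$, hence it is all of $X$. You should state the one-line reason (write $y=N(y/N)$ with $\|y/N\|$ small), because this is exactly where the vector-space, i.e.\ connectedness, structure enters. After that, any $a\in U$ together with any ball $B(b,r)\subseteq V=X\setminus\overline U$ gives the contradiction, and no geometric construction is needed.

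\textbf{What each buys.} Your argument is shorter and works verbatim in any connected topological group that is a Baire space: a subgroup whose closure has nonempty interior has open, hence clopen, closure. This also explains Remark~\ref{rmk:notalltopgroups}, since $2^\omega$ has proper open subgroups. The paper's argument only uses density of $\mathcal{D}(A)$ near $0$ together with the norm geometry, but it gains nothing essential over yours.

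\textbf{The one-dimensional part.} Here too you differ. The paper relies on the closedness of $\mathcal{D}(A)$ (Lemma~\ref{lem:DAclosed}) together with the classification of closed subgroups of $\mathbb{R}$. You instead use the classification of arbitrary subgroups of $\mathbb{R}$ (cyclic or dense) and feed ``dense'' back into (A3)$\Rightarrow$(A2), so the closedness lemma is not needed.

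\textbf{Minor points.}
\begin{itemize}
\item An accumulation point of a subgroup need not belong to it. Justify ``not discrete'' through the small nonzero differences $g_n-g_m$.
\item State explicitly that a one-dimensional real Banach space is linearly homeomorphic to $\mathbb{R}$.
\end{itemize}
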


Some remarks are in order, cf. also Remark \ref{rmk:noncomplete} below. 

\begin{rmk}\label{rmk:notalltopgroups}
    The equivalences in the first part of Theorem \ref{thm:maincategorycaseone} do not hold for all topological groups. Indeed, consider the Polish group $X=2^\omega$ and define the open set $A:=\{x \in 2^\omega: x_0=0\}$. Then neither $A$ nor $A^c$ are meager, and $\mathcal{D}(A)=A$. Therefore items \ref{item:01firstthm} and \ref{item:02firstthm} fail, while item \ref{item:03firstthm} holds.  
    \end{rmk}

    \begin{rmk}\label{rmk:notdimensiongreaterone}
    The equivalences in the second part of Theorem \ref{thm:maincategorycaseone} do not hold in dimension greater than $1$. Indeed, consider the Euclidean space $X=\mathbb{R}^d$ with $d\ge 2$ and define the open set $A:=\mathbb{R}^{d-1}\times (0,1)$. Then $\mathcal{D}(A)=\mathbb{R}^{d-1}\times \{0\}$. It follows that items \ref{item:01firstthm}--\ref{item:03firstthm} fail, while items \ref{item:04firstthm}--\ref{item:06firstthm} hold. 
    \end{rmk}

Since in \cite{MR2083818, MR2458270, MR2739064} the results are formulated only in the real case $X=\mathbb{R}$ and in terms of injective real sequences $(s_n: n \in \omega)$ such that 
\begin{equation}\label{eq:AsimAplusxnforalln}
    \forall n \in \omega, \quad A+s_n\sim A,
\end{equation}
we formulate the equivalences of Theorem \ref{thm:maincategorycaseone} analogously.
\begin{cor}\label{cor:onedimensional}
    Let $A\subseteq \mathbb{R}$ be a set with the Baire property. 
    Then the following are equivalent\textup{:}
    \begin{enumerate}
    [label=\textup{(}\textsc{B}\arabic*\textup{)}]

     \item \label{item:01firstthmcor} $A\sim \emptyset$ or $A\sim \mathbb{R}$\textup{;}
    
     \item \label{item:02firstthmcor} 
     there exists an injective sequence $(s_n)$ satisfying \eqref{eq:AsimAplusxnforalln} such that $\lim_n s_n=0$\textup{;}

     \item \label{item:03firstthmcor} there exists an injective bounded sequence $(s_n)$ satisfying \eqref{eq:AsimAplusxnforalln}\textup{;}

    \item \label{item:04firstthmcor} 
    there exists an injective sequence $(s_n)$ satisfying \eqref{eq:AsimAplusxnforalln} such that 
     $$
     \inf_{n,k \in \omega, n\neq k}|s_n-s_k|=0\textup{;}
     $$


 \item \label{item:06firstthmcor} 
    there exists an injective sequence $(s_n)$ satisfying \eqref{eq:AsimAplusxnforalln} and there is no $r>0$ such that $s_n/r$ is an integer for all $n \in \omega$\textup{;}

     \item \label{item:07firstthmcor} 
    there exists an injective sequence $(s_n)$ satisfying \eqref{eq:AsimAplusxnforalln} 
     such that $\{s_n: n \in \omega\}$ is dense\textup{.}
    \end{enumerate}

In particular, either $\mathcal{D}(A)=\mathbb{R}$ or $0$ is an isolated point of $\mathcal{D}(A)$.    
\end{cor}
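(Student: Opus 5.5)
We reduce everything to Theorem \ref{thm:maincategorycaseone} with $X=\mathbb{R}$. Condition \eqref{eq:AsimAplusxnforalln} says precisely that $s_n\in\mathcal{D}(A)$ for all $n$. We also use two easy facts about $\mathcal{D}(A)$. First, $\mathcal{D}(A)$ is a subgroup of $\mathbb{R}$. Indeed, $0\in\mathcal{D}(A)$. If $x,y\in\mathcal{D}(A)$, then $A+x+y\sim A+y\sim A$, because translation preserves meagerness. Also $A-x\sim A$ follows by translating $A+x\sim A$ by $-x$. Second, by Theorem \ref{thm:maincategorycaseone}, \ref{item:01firstthmcor} is equivalent to each of \ref{item:02firstthm}, \ref{item:04firstthm}, \ref{item:05firstthm} and \ref{item:06firstthm}.

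The plan is to prove \ref{item:01firstthmcor} $\Rightarrow$ \ref{item:07firstthmcor} $\Rightarrow$ \ref{item:06firstthmcor} $\Rightarrow$ \ref{item:01firstthmcor}, and \ref{item:01firstthmcor} $\Rightarrow$ \ref{item:02firstthmcor} $\Rightarrow$ \ref{item:03firstthmcor} $\Rightarrow$ \ref{item:04firstthmcor} $\Rightarrow$ \ref{item:01firstthmcor}.

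\emph{Forward implications.} If \ref{item:01firstthmcor} holds, then $\mathcal{D}(A)=\mathbb{R}$ by \ref{item:02firstthm}. So any injective sequence works: $s_n=1/(n+1)$ gives \ref{item:02firstthmcor}, and an enumeration of $\mathbb{Q}$ gives \ref{item:07firstthmcor}. The implication \ref{item:02firstthmcor} $\Rightarrow$ \ref{item:03firstthmcor} is trivial, since convergent sequences are bounded. For \ref{item:03firstthmcor} $\Rightarrow$ \ref{item:04firstthmcor}, an injective bounded sequence has infinitely many distinct values in a compact interval. By Bolzano--Weierstrass these values accumulate somewhere, so pairwise differences of distinct terms can be made arbitrarily small. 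For \ref{item:07firstthmcor} $\Rightarrow$ \ref{item:06firstthmcor}: if every $s_n$ lay in $r\mathbb{Z}$, the set $\{s_n\}$ would be discrete and hence not dense.

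\emph{Back to \ref{item:01firstthmcor}.} From \ref{item:04firstthmcor}, pick $n_j\ne k_j$ with $0<|s_{n_j}-s_{k_j}|\to0$. These differences lie in $\mathcal{D}(A)$ because it is a group, and they are nonzero by injectivity. Hence $0\in\mathcal{D}(A)'$, which is \ref{item:04firstthm}, and Theorem \ref{thm:maincategorycaseone} gives \ref{item:01firstthmcor}. From \ref{item:06firstthmcor}, assume \ref{item:01firstthmcor} fails. Then \ref{item:05firstthm} fails, so $\mathcal{D}(A)$ has no accumulation points and is a discrete subgroup of $\mathbb{R}$. Hence $\mathcal{D}(A)=r\mathbb{Z}$ for some $r\ge0$. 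Since $(s_n)$ is injective and contained in $\mathcal{D}(A)$, we get $\mathcal{D}(A)\neq\{0\}$, so $r>0$. Then every $s_n/r$ is an integer, contradicting \ref{item:06firstthmcor}.

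\emph{Final claim.} If $0$ is not an isolated point of $\mathcal{D}(A)$, then $0\in\mathcal{D}(A)'$ because $0\in\mathcal{D}(A)$. By \ref{item:04firstthm}, this gives $\mathcal{D}(A)=\mathbb{R}$.

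The only step requiring care is the classification of discrete subgroups of $\mathbb{R}$ as $r\mathbb{Z}$, which is standard. Alternatively, that case follows directly from \ref{item:06firstthm} of Theorem \ref{thm:maincategorycaseone}.
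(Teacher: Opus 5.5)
Your proof is correct, but it is organized differently from the paper's. The paper runs a single cycle \ref{item:01firstthmcor} $\Rightarrow$ \ref{item:02firstthmcor} $\Rightarrow$ \ref{item:03firstthmcor} $\Rightarrow$ \ref{item:04firstthmcor} $\Rightarrow$ \ref{item:06firstthmcor} $\Rightarrow$ \ref{item:07firstthmcor} $\Rightarrow$ \ref{item:01firstthmcor}. Three of its steps differ from yours:
\begin{itemize}
\item \ref{item:04firstthmcor} $\Rightarrow$ \ref{item:06firstthmcor} is pure arithmetic: if all $s_n\in r\mathbb{Z}$, then $|s_n-s_k|\ge r$ for $n\neq k$, and $\mathcal{D}(A)$ plays no role.
\item \ref{item:06firstthmcor} $\Rightarrow$ \ref{item:07firstthmcor} uses the closedness of $\mathcal{D}(A)$ (Lemma \ref{lem:DAclosed}) and the closed-subgroup dichotomy (Lemma \ref{lem:closedadditivesubgroup}) to get $\mathcal{D}(A)=\mathbb{R}$.
\item \ref{item:07firstthmcor} $\Rightarrow$ \ref{item:01firstthmcor} again uses closedness, and then only \ref{item:02firstthm} $\Rightarrow$ \ref{item:01firstthm} from Theorem \ref{thm:maincategorycaseone}.
\end{itemize}
You instead split into two cycles and return to \ref{item:01firstthmcor} through the one-dimensional items \ref{item:04firstthm} and \ref{item:05firstthm} (or \ref{item:06firstthm}) of the theorem. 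You use the subgroup property to turn the small differences $s_n-s_k$ into small nonzero elements of $\mathcal{D}(A)$, and you use the classification of discrete subgroups of $\mathbb{R}$. Your route is slightly shorter, never invokes closedness of $\mathcal{D}(A)$ explicitly (it is hidden inside the proof of \ref{item:04firstthm}--\ref{item:06firstthm}), and actually proves the final ``in particular'' clause, which the paper leaves implicit. The paper's route is more self-contained: it needs only the dimension-free equivalence \ref{item:01firstthm}$\Leftrightarrow$\ref{item:02firstthm} plus the two lemmas, not the one-dimensional part of the theorem.
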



%
%
%

In the case where $\mathcal{D}(A)$ contains two nonzero elements with irrational ratio, we have the following consequence: 
\begin{cor}\label{cor:Qlinearlyindependent}
Let $A\subseteq \mathbb{R}$ be a set with the Baire property, and suppose that there exist $x,y \in \mathbb{R}$ such that 
$$
A\sim A+x\sim A+y
$$
and 
$\{x,y\}$ is linearly independent over $\mathbb{Q}$. Then $A\sim \emptyset$ or $A\sim \mathbb{R}$.
\end{cor}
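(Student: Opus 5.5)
The plan is to reduce the statement to Theorem \ref{thm:maincategorycaseone} with $X=\mathbb{R}$, using item \ref{item:06firstthm} (or \ref{item:04firstthm}). The first step is to check that $\mathcal{D}(A)$ is an additive subgroup of $\mathbb{R}$. Translation by a fixed vector is a homeomorphism of $\mathbb{R}$, so it maps meager sets to meager sets. Hence $A+x\sim A$ implies $A\sim A-x$, which gives closure under inverses. Similarly, $A+x\sim A$ and $A+y\sim A$ imply $A+x+y\sim A+y\sim A$, which gives closure under sums; and $0\in\mathcal{D}(A)$ trivially. Since $\sim$ is an equivalence relation, the hypothesis $A\sim A+x\sim A+y$ yields $x,y\in\mathcal{D}(A)$, and therefore $\{mx+ny: m,n\in\mathbb{Z}\}\subseteq \mathcal{D}(A)$.

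The second step is to show that $\mathcal{D}(A)$ is not cyclic, which is exactly condition \ref{item:06firstthm}. Suppose instead that $\mathcal{D}(A)=\{nz: n\in\mathbb{Z}\}$ for some $z\in\mathbb{R}$. Linear independence over $\mathbb{Q}$ forces $x\neq 0$, so $z\neq 0$. Writing $x=pz$ and $y=qz$ with $p,q\in\mathbb{Z}$ and $p\neq 0$, we get $y=(q/p)x$. This is a nontrivial rational relation between $x$ and $y$, contradicting their independence over $\mathbb{Q}$. Thus \ref{item:06firstthm} holds, and Theorem \ref{thm:maincategorycaseone} gives $A\sim\emptyset$ or $A\sim\mathbb{R}$.

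As an alternative route, one could instead verify \ref{item:04firstthm} directly. Kronecker's theorem says that $\mathbb{Z}x+\mathbb{Z}y$ is dense in $\mathbb{R}$ when $x/y\notin\mathbb{Q}$. So $0$ is an accumulation point of $\mathcal{D}(A)$, and even \ref{item:03firstthm} holds. There is no real obstacle here; the only point requiring care is the subgroup property, which needs translation invariance of the meager ideal.
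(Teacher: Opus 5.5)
Your proposal is correct and is essentially the paper's argument: the paper also shows that $x$ and $y$ cannot both lie in a single cyclic group $r\mathbb{Z}$, since $x=ar$ and $y=br$ would give the nontrivial rational relation $ay-bx=0$, so $\mathcal{D}(A)$ is not cyclic; it then concludes via Corollary \ref{cor:onedimensional}, whereas you invoke item \ref{item:06firstthm} of Theorem \ref{thm:maincategorycaseone} directly and re-derive the subgroup property recorded in Lemma \ref{lem:DAsubgroup}. You implicitly read the hypothesis as independence of the pair $(x,y)$, so that $x\neq y$ and both are nonzero, and this is the same reading the paper uses.
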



\subsection{Many sets in the real space}

Given a subset $A$ of a topological group $X$, we write $\bm{1}_A$ for the characteristic function of $A$ (as in $X=\mathbb{R}$). Moreover, given a statement $P(z)$, with $z \in X$, which can be either true or false, we write $P \bmod{\mathscr{M}}$ if $\{z \in X: P(z) \text{ is false}\} \in \mathscr{M}$. In particular, we can write equivalently $A=B \bmod{\mathscr{M}}$ for $A\sim B$, and also 
$$
\mathcal{D}(A)=\{x \in X: \bm{1}_{A+x}- \bm{1}_A=0 \bmod{\mathscr{M}}\}.
$$  
With these premises, given subsets $A_1,\ldots,A_k\subseteq X$ and nonzero reals $\alpha_1,\ldots,\alpha_k$, define
$$
\mathcal{D}_{\alpha_1,\ldots,\alpha_k}(A_1,\ldots,A_k):=\left\{x \in X^k: \sum_{i=1}^k\alpha_i(\bm{1}_{A_i+x_i}-\bm{1}_{A_i})=0\,\,\bmod{\mathscr{M}}\right\}.
$$
Moreover, if $\alpha_1=\cdots=\alpha_k$, we shorten $\mathcal{D}(A_1,\ldots,A_k):=\mathcal{D}_{\alpha_1,\ldots,\alpha_k}(A_1,\ldots,A_k)$. 

\begin{rmk}\label{rmk:atleastonemeagercomeager}
Let us suppose that at least one set in $\{A_1,\ldots,A_k\}$ is meager or comeager, let us say $A_k$. Then 
$$
\mathcal{D}_{\alpha_1,\ldots,\alpha_k}(A_1,\ldots,A_k)=
\mathcal{D}_{\alpha_1,\ldots,\alpha_{k-1}}(A_1,\ldots,A_{k-1})\times X.
$$
In particular, $\mathcal{D}_{\alpha_1,\ldots,\alpha_k}(A_1,\ldots,A_k)=X^k$ if each $A_i$ is either meager or comeager. Hence, to simplify our statements, it is sufficient to consider the case where each set $A_i$ is neither meager nor comeager. 
\end{rmk}


\medskip

Hereafter, we specialize to the one-dimensional case $X=\mathbb{R}$. We say that $\{A_1,\ldots,A_k\}$ is a partition of $\mathbb{R}$ modulo $\mathscr{M}$ if $A_1 \cup \cdots \cup A_k$ is comeager and $A_i \cap A_j$ is meager for all distinct $i,j \in \{1,\ldots,k\}$. Moreover, we write $S^c:=\mathbb{R}\setminus S$ for each $S\subseteq \mathbb{R}$. For notational convenience, given nonzero reals $\alpha_1,\ldots,\alpha_k$, define 
$$
\mathrm{FS}(\alpha_1,\ldots,\alpha_k):=\left\{\,\sum_{i \in I}\alpha_i: I \subseteq \{1,\ldots,k\}\right\}.
$$
Note that a variant of the map $\mathrm{FS}$ has been used in Hindman's finite sum theorem, see \cite[Theorem 3.1]{MR349574}; cf. also \cite{MR4356195, 
FKL2026, 
MR1887003}.
Lastly, given a positive integer $k$, let 
$$
\Delta_k:=\{(x,\ldots,x) \in \mathbb{R}^k: x \in \mathbb{R}\}
$$
be the principal diagonal of $\mathbb{R}^k$. 

In the following result, we study the case where $k$ is an arbitrary positive integer and all finite sums of $\{\alpha_1,\ldots,\alpha_k\}$ are distinct (or, equivalently, $\mathrm{FS}(\alpha_1,\ldots,\alpha_k)$ has maximal cardinality). 
\begin{thm}\label{thm:distinctFS}
Let $A_1,\ldots,A_k\subseteq \mathbb{R}$ be sets with the Baire property and fix nonzero reals $\alpha_1,\ldots,\alpha_k$ such that all finite sums are distinct. Then the following are equivalent\textup{:}
\begin{enumerate}
[label=\textup{(}\textsc{C}\arabic*\textup{)}]
\item \label{item:1FS} each $A_i$ is either meager or comeager\textup{;} 

\item \label{item:2FS} $\mathcal{D}_{\alpha_1,\ldots,\alpha_k}(A_1,\ldots,A_k)=\mathbb{R}^k$\textup{;}

\item \label{item:3FS}$\Delta_k\subseteq \mathcal{D}_{\alpha_1,\ldots,\alpha_k}(A_1,\ldots,A_k)$\textup{;}

\item \label{item:4FS} there exists an injective sequence $(x_n)$ with values in 
$\mathcal{D}_{\alpha_1,\ldots,\alpha_k}(A_1,\ldots,A_k)$ such that 
$\lim_n x_n=(0,\ldots,0)$ and $x_{n,i}\neq 0$ for all $n \in \omega$ and all $i \in \{1,\ldots,k\}$\textup{.}
\end{enumerate}
Hence, either $\mathcal{D}_{\alpha_1,\ldots,\alpha_k}(A_1,\ldots,A_k)$ is equal to $\mathbb{R}^k$, or there exists a neighborhood $U$ of $(0,\ldots,0)$ such that every point of 
$
\mathcal{D}_{\alpha_1,\ldots,\alpha_k}(A_1,\ldots,A_k)\cap U
$ 
has at least one zero coordinate.
\end{thm}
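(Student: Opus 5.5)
The implications \ref{item:1FS}$\Rightarrow$\ref{item:2FS} (Remark \ref{rmk:atleastonemeagercomeager}, applied $k$ times) and \ref{item:2FS}$\Rightarrow$\ref{item:3FS} are immediate. For \ref{item:3FS}$\Rightarrow$\ref{item:4FS}, pick any injective sequence $t_n\to 0$ with $t_n\neq 0$ and take $x_n:=(t_n,\ldots,t_n)\in\Delta_k$. The real work is \ref{item:4FS}$\Rightarrow$\ref{item:1FS}. The closing dichotomy follows at once: if $\mathcal{D}_{\alpha_1,\ldots,\alpha_k}(A_1,\ldots,A_k)\neq\mathbb{R}^k$, then \ref{item:4FS} fails, which is exactly the neighborhood statement.

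For \ref{item:4FS}$\Rightarrow$\ref{item:1FS}, the plan is to decouple the coordinates using distinctness of finite sums. Fix $n$ and consider $f_n:=\sum_i\alpha_i(\bm{1}_{A_i+x_{n,i}}-\bm{1}_{A_i})$. For a point $z$, let $I(z):=\{i: z\in A_i+x_{n,i}\}$ and $J(z):=\{i:z\in A_i\}$. Then $f_n(z)=0$ means $\sum_{i\in I(z)}\alpha_i=\sum_{i\in J(z)}\alpha_i$. Since all finite sums are distinct, this forces $I(z)=J(z)$. Hence $\bm{1}_{A_i+x_{n,i}}=\bm{1}_{A_i}$ for every $i$, off a meager set. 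I will check this carefully: distinctness of $\mathrm{FS}$ as a set of $2^k$ values is precisely injectivity of $I\mapsto\sum_{i\in I}\alpha_i$. So $x_{n,i}\in\mathcal{D}(A_i)$ for all $n$ and $i$.

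Now fix $i$. The sequence $(x_{n,i})_n$ consists of nonzero elements of $\mathcal{D}(A_i)$ converging to $0$, so $0\in\mathcal{D}(A_i)'$. Injectivity of the sequence of vectors does not by itself give injectivity of each coordinate, but we do not need it: nonzero terms tending to $0$ already make $0$ an accumulation point. Theorem \ref{thm:maincategorycaseone} in the one-dimensional case, implication \ref{item:04firstthm}$\Rightarrow$\ref{item:01firstthm}, then yields $A_i\sim\emptyset$ or $A_i\sim\mathbb{R}$, which is \ref{item:1FS}.

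The main obstacle is the decoupling step. There the only subtlety is that the exceptional meager set depends on $n$, but each statement is made for a single $n$, so no countable union issue arises beyond what the definitions already handle. Everything else reduces to Theorem \ref{thm:maincategorycaseone}, which is assumed.
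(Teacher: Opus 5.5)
Your proof is correct. For the one substantive implication \ref{item:4FS}$\Rightarrow$\ref{item:1FS} it takes a different route from the paper.

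Both arguments start from the same fact: the map $I\mapsto\sum_{i\in I}\alpha_i$ is injective. The paper applies it to atoms. After discarding the indices of meager or comeager sets (the set $L$), it shows that every atom $\bigcap_{i\in L}A_i^{\varepsilon_i}$ is invariant modulo $\mathscr{M}$ under the shifts $(x_{n,i})_{i\in L}$. It then applies Lemma \ref{lem:twodimensionalcase}, the left-endpoint argument for connected components of an intersection of open sets, to conclude that every atom is meager. This yields the contradiction that $\mathbb{R}$ is meager.

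You instead read the decoupling pointwise and get $x_{n,i}\in\mathcal{D}(A_i)$ for each $i$ separately. In effect this shows $\mathcal{D}_{\alpha_1,\ldots,\alpha_k}(A_1,\ldots,A_k)=\mathcal{D}(A_1)\times\cdots\times\mathcal{D}(A_k)$, the reverse inclusion being trivial. You then apply the one-dimensional part of Theorem \ref{thm:maincategorycaseone} coordinatewise. So your key input is the closed-subgroup structure of $\mathcal{D}(A_i)$ (Lemmas \ref{lem:DAsubgroup}, \ref{lem:DAclosed}, \ref{lem:closedadditivesubgroup}), not Lemma \ref{lem:twodimensionalcase}.

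What your route gains:
\begin{enumerate}
\item It is shorter and reduces the theorem to the single-set case.
\item It describes $\mathcal{D}_{\alpha_1,\ldots,\alpha_k}(A_1,\ldots,A_k)$ explicitly as a product of factors, each equal to $\mathbb{R}$ or to some $r_i\mathbb{Z}$.
\item It shows that the nonzero coordinates in \ref{item:4FS} need not occur simultaneously. It is enough that $0$ be an accumulation point of each coordinate projection.
\end{enumerate}

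What the paper's route gains: it exercises Lemma \ref{lem:twodimensionalcase}, which remains usable when finite sums coincide (Theorems \ref{thm:twodimensional} and \ref{thm:generalcase}). In those settings a full coordinatewise decoupling like yours is not available.
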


\medskip

Next, we present a characterization in the case $k=2$ and $\alpha_1=\alpha_2$, which provides (a refinement of) the category analogue of \cite[Theorem 5]{MR2083818}. Together with Theorem \ref{thm:distinctFS} and Remark \ref{rmk:atleastonemeagercomeager}, this completes the study of the case $k=2$. 
(It is worth noting that, differently from our previous results, this is not a type of topological zero-one law.)
\begin{thm}\label{thm:twodimensional}
    Let $A,B\subseteq \mathbb{R}$ be sets with the Baire property such that each one is neither meager nor comeager. Then the following are equivalent\textup{:}
    \begin{enumerate}
    [label=\textup{(}\textsc{D}\arabic*\textup{)}]
    \item \label{item1twodimensional} $\{A,B\}$ is a partition of $\mathbb{R}$ modulo $\mathscr{M}$\textup{;}

    \item \label{item2twodimensional} 
    $\mathcal{D}(A,B)=\{(x,x+rz): x \in \mathbb{R}, z \in \mathbb{Z}\}$ for some $r \in \mathbb{R}$\textup{;}

    \item \label{item2Btwodimensional} $\Delta_2\subseteq \mathcal{D}(A,B)$\textup{;}

    \item \label{item3twodimensional} $(0,0)\in \mathcal{D}(A,B)^\prime$, i.e., there exists an injective sequence $(x_n,y_n)$ with values in $\mathbb{R}^2$ which converges to $(0,0)$ and such that 
    $$
    \forall n \in \omega, \quad 
\bm{1}_{A+x_{n}}+\bm{1}_{B+y_n}=\bm{1}_A+\bm{1}_B\bmod{\mathscr{M}}. 
    $$
    \end{enumerate}
\end{thm}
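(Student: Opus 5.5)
The implications (D1)$\Rightarrow$(D2)$\Rightarrow$(D3)$\Rightarrow$(D4) should be essentially formal, and the real content is (D4)$\Rightarrow$(D1). First I would rewrite membership in $\mathcal{D}(A,B)$ via level sets. Since $\bm{1}_{A+x}+\bm{1}_{B+y}$ takes values in $\{0,1,2\}$, the relation $(x,y)\in\mathcal{D}(A,B)$ is equivalent to the two conditions
$$
(A+x)\cap(B+y)\sim A\cap B \quad\text{and}\quad (A+x)^c\cap(B+y)^c\sim A^c\cap B^c .
$$

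\emph{(D1)$\Rightarrow$(D2).} Assume $B\sim A^c$. Then $B+y\sim (A+y)^c$, so the defining relation becomes $\bm{1}_{A+x}=\bm{1}_{A+y} \bmod \mathscr{M}$. This means $(x,y)\in\mathcal{D}(A,B)$ if and only if $y-x\in\mathcal{D}(A)$. Now $\mathcal{D}(A)$ is a subgroup of $\mathbb{R}$. Since $A$ is neither meager nor comeager, Theorem \ref{thm:maincategorycaseone} (in the form of Corollary \ref{cor:onedimensional}) shows that $0$ is an isolated point of $\mathcal{D}(A)$. Hence $\mathcal{D}(A)$ is a discrete subgroup, so $\mathcal{D}(A)=r\mathbb{Z}$ for some $r\ge 0$, and this gives (D2).

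\emph{(D2)$\Rightarrow$(D3)$\Rightarrow$(D4).} Taking $z=0$ in (D2) gives (D3). Taking $(x_n,y_n)=(1/n,1/n)$ in (D3) gives (D4).

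\emph{(D4)$\Rightarrow$(D1).} Replace $A$ and $B$ by the regular open sets $G\sim A$ and $H\sim B$. In $\mathbb{R}$, two open sets that agree modulo $\mathscr{M}$ have the same regularization, so the two level-set conditions become exact equalities of regularized sets:
$$
W:=G\cap H=\operatorname{int}\overline{(G+x_n)\cap(H+y_n)}, \qquad V:=\mathbb{R}\setminus\overline{G\cup H}=\operatorname{int}\overline{(\mathbb{R}\setminus\overline{G+x_n})\cap(\mathbb{R}\setminus\overline{H+y_n})}.
$$
The goal is to show $W=\emptyset$ and $V=\emptyset$. These two facts say exactly that $A\cap B$ is meager and $A\cup B$ is comeager, which is (D1).

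\emph{Reduction to both coordinates nonzero.} First I would exclude degenerate subsequences. Suppose $x_n=0$ for infinitely many $n$. Along those $n$ the relation reduces to $\bm{1}_{B+y_n}=\bm{1}_B \bmod\mathscr{M}$, with $y_n\neq 0$ distinct and $y_n\to 0$. Then $0\in\mathcal{D}(B)'$, so $B$ is meager or comeager by Theorem \ref{thm:maincategorycaseone}, contradicting the hypothesis. The case $y_n=0$ infinitely often is symmetric. So after passing to a subsequence we may assume $x_n\neq 0$ and $y_n\neq0$ for all $n$.

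\emph{Endpoint argument.} Suppose $W\neq\emptyset$. Since $A$ is not comeager, $W\neq\mathbb{R}$, so $W$ has a component interval $(a,b)$ with a finite endpoint, say $b$. The point $b$ lies outside $G$ or outside $H$; say $b\notin G$, so $b$ is a boundary point of $G$. Components of $W$, $G$, $H$ of length at least $(b-a)/2$ are locally finite. Hence for $n$ large, $(G+x_n)\cap(H+y_n)$ has a component close to $(a,b)$, and its right endpoint is $b+x_n$, or $b+y_n$, or the smaller of the two if $b$ is a boundary point of both. All of these differ from $b$, yet this component must coincide with $(a,b)$, which is a contradiction. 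The same argument applied to $V$, using the complements of the closures, gives $V=\emptyset$.

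\emph{Main obstacle.} The hard step is making this endpoint argument rigorous. One has to control what happens near $b$ when $G$ and $H$ have infinitely many small components accumulating there. I would handle this by choosing $b$ to be an endpoint of a component of $W$ and arguing inside a small neighborhood of $b$ where only finitely many components of length comparable to $b-a$ appear. One must also check that the small components cannot recreate the endpoint $b$ after translation; the regular-open normalization is what should rule this out.
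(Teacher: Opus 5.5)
Your proof is correct and is essentially the paper's: the same chain (D1)$\Rightarrow$(D2)$\Rightarrow$(D3)$\Rightarrow$(D4), the same appeal to Theorem~\ref{thm:maincategorycaseone} to make both coordinates nonzero, and your endpoint argument is Lemma~\ref{lem:twodimensionalcase} with $k=2$ applied to $(A,B)$ and to $(A^c,B^c)$. Your passage to regular open representatives is precisely what turns $\sim$ into exact equality of the shifted intersections with $W$ and $V$; this is needed because a symmetric difference of open sets need not be open.

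The ``main obstacle'' you flag is not one. Fix $q\in(a,b)$; for small shifts, the component of $(G+x_n)\cap(H+y_n)$ containing $q$ is the intersection of the translates of the components of $G$ and $H$ containing $(a,b)$. So nearby small components are irrelevant, and its right endpoint is $b+x_n$, $b+y_n$ or $b+\min(x_n,y_n)$, never $b$.
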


Observe that item \ref{item2twodimensional} above can be rewritten equivalently as $\mathcal{D}(A,B)=\Delta_2+(\{0\}\times r\mathbb{Z})$ for some $r \in \mathbb{R}$. 
The next two examples show the case where $k\ge 3$ and $\mathrm{FS}(\alpha_1,\ldots,\alpha_k)$ does not have maximal cardinality (for instance, if $\alpha_1=\cdots=\alpha_k=1$) can be substantially different. 
\begin{example}\label{example:3A}
    Suppose that $X=\mathbb{R}$, and set $R_1:=\{0\}$, $R_2:=\{1,2\}$, and $R_3:=\{0,1\}$. Thus, define the sets
    \begin{equation}\label{eq:dfinizioneexample3case}
    A_i:=\left\{x \in \mathbb{R}: \lfloor x \rfloor \in R_i+3\mathbb{Z}\right\}
    \end{equation}
    for each $i \in \{1,2,3\}$. Then it is straightforward to check that 
    $$
    \mathcal{D}(A_1,A_2,A_3)=(3\mathbb{Z})^3\cup \left((3\mathbb{Z}+1)\times (3\mathbb{Z}-1)\times 3\mathbb{Z}\right).
    $$
    In particular, $A_1\cup A_2\cup A_3 =\mathbb{R}$, $A_1\cap A_2\cap A_3=\emptyset$, and $(0,0,0)\notin \mathcal{D}(A_1,A_2,A_3)^\prime$. 
\end{example}

\begin{example}\label{example:3B}
    Suppose that $X=\mathbb{R}$, and set $R_1:=\{0,1\}$, $R_2:=\{0,2\}$, and $R_3:=\{1,2\}$. Define the sets $A_1,A_2,A_3$ as in \eqref{eq:dfinizioneexample3case}. Then 
    $$
    \mathcal{D}(A_1,A_2,A_3)=\Delta_3+\, \{0\}\times \left((3\mathbb{Z})^2)\cup ((3\mathbb{Z}-1)\times (3\mathbb{Z}+1))\right).
    $$
    In particular, $A_1\cup A_2\cup A_3 =\mathbb{R}$, $A_1\cap A_2\cap A_3=\emptyset$, and $(0,0,0)\in \mathcal{D}(A_1,A_2,A_3)^\prime$. Note that, although $\Delta_3\subseteq \mathcal{D}(A_1,A_2,A_3)$, it is not true that $\{A_1,A_2,A_3\}$ is a partition of $\mathbb{R}$ modulo $\mathscr{M}$. However, 
    there exists a partition $\{B_1,B_2,B_3\}$ of $\mathbb{R}$ modulo $\mathscr{M}$ such that $A_1=B_1\cup B_2$, $A_2=B_2\cup B_3$, and $A_3=B_3\cup B_1$ modulo $\mathscr{M}$. 
\end{example}


In our last main result, we study the case of arbitrary $k\ge 2$ and $\alpha_1=\cdots=\alpha_k$, under an additional restriction on the sets $A_i$. 

\begin{thm}\label{thm:generalcase}
Given an integer $k\ge 2$, let $A_1,\ldots,A_k\subseteq \mathbb{R}$ be sets with the Baire property such that each $A_i$ is neither meager nor comeager. 
In addition, assume that the boundary points of each $A_i$ are isolated, that is, 
\begin{equation}\label{eq:uglyconstraint}
\forall i \in \{1,\ldots,k\}, \quad \left(\partial A_i\right)^\prime = \emptyset. 
\end{equation}
Then the following are equivalent\textup{:}
\begin{enumerate}
[label=\textup{(}\textsc{E}\arabic*\textup{)}]
\item \label{item:1general} there exists an integer $h \in \{1,\ldots,k-1\}$ such that 
$$
\left\{\bigcap_{i \in I} A_i: I\subseteq \{1,\ldots,k\} \text{ and }|I|=h\right\}
$$
is a partition of $\mathbb{R}$ modulo $\mathscr{M}$\textup{;}

\item \label{item:2general} there exists $h \in \{1,2,\ldots,k-1\}$ such that $\sum_{i=1}^k \bm{1}_{A_i}=h \bmod{\mathscr{M}}$\textup{;} 

\item \label{item:3general} $\mathcal{D}(A_1,\ldots,A_k)=\Delta_k+(\{0\}\times \mathcal{D}(A_2,\ldots,A_k))$\textup{;}

\item \label{item:4general} for every neighborhood $U$ of $(0,\ldots,0)$ there exists 
$x=(x_1,\ldots,x_k)\in \mathcal{D}(A_1,\ldots,A_k)\cap U$ such that $x_i\neq 0$ for all $i\in \{1,\ldots,k\}$\textup{;}

\item \label{item:5general}$\Delta_k\subseteq \mathcal{D}(A_1,\ldots,A_k)$\textup{;}

\item \label{item:6general} there exists an injective sequence
$(x_n)$ with values in $\mathcal{D}(A_1,\ldots,A_k)$ such that
$x_n\to(0,\ldots,0)$ and $x_{n,i}\ne0$ for every
$i\in\{1,\ldots,k\}$ and every $n\in \omega$\textup{.}
\end{enumerate}
Hence, either $\Delta_k\subseteq \mathcal{D}(A_1,\ldots,A_k)$, or there exists a neighborhood $U$ of $(0,\ldots,0)$ such that every point of 
$
\mathcal{D}(A_1,\ldots,A_k)\cap U
$ 
has at least one zero coordinate.
\end{thm}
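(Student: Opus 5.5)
The plan is to prove the cycle \ref{item:1general}$\Leftrightarrow$\ref{item:2general}$\Rightarrow$\ref{item:3general}$\Rightarrow$\ref{item:5general}$\Rightarrow$\ref{item:6general}$\Rightarrow$\ref{item:4general}$\Rightarrow$\ref{item:2general}. The last implication carries the content of the theorem and is the only place where \eqref{eq:uglyconstraint} is used. The final ``Hence'' sentence is then just the contrapositive of \ref{item:4general}$\Rightarrow$\ref{item:5general}.

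\emph{The easy implications.} Put $f:=\sum_{i}\bm{1}_{A_i}$.

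For \ref{item:1general}$\Leftrightarrow$\ref{item:2general}, we argue pointwise. A point $z$ lies in exactly $f(z)$ of the sets $A_i$. Hence $z$ lies in exactly $\binom{f(z)}{h}$ of the intersections $\bigcap_{i\in I}A_i$ with $|I|=h$. This number equals $1$ if and only if $f(z)=h$ (for $h\ge 1$). So the family of $h$-fold intersections is a partition modulo $\mathscr{M}$ if and only if $f=h \bmod{\mathscr{M}}$.

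For \ref{item:2general}$\Rightarrow$\ref{item:3general}, first observe that $\mathcal{D}(A_1,\ldots,A_k)$ is a subgroup of $\mathbb{R}^k$. Indeed, $x\in\mathcal{D}$ means $\sum_i\bm{1}_{A_i+x_i}=f \bmod{\mathscr{M}}$, and translates of meager sets are meager; so from $x,y\in\mathcal{D}$ we get $x+y\in\mathcal{D}$ and $-x\in\mathcal{D}$. If $f=h \bmod{\mathscr{M}}$, then $\sum_i\bm{1}_{A_i+t}=f+t=h=f \bmod{\mathscr{M}}$ for every $t$, so $\Delta_k\subseteq\mathcal{D}$. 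Consequently $x\in\mathcal{D}$ if and only if $x-x_1(1,\ldots,1)\in\mathcal{D}$. A vector with first coordinate $0$ lies in $\mathcal{D}(A_1,\ldots,A_k)$ exactly when its last $k-1$ coordinates lie in $\mathcal{D}(A_2,\ldots,A_k)$, because the term $\bm{1}_{A_1}$ cancels. This gives \ref{item:3general}.

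The implications \ref{item:3general}$\Rightarrow$\ref{item:5general}$\Rightarrow$\ref{item:6general}$\Rightarrow$\ref{item:4general} are immediate; for the middle one take $x_n=(1/(n+1),\ldots,1/(n+1))$.

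\emph{The main step, \ref{item:4general}$\Rightarrow$\ref{item:2general}.} By \eqref{eq:uglyconstraint}, the set $E:=\bigcup_i\partial A_i$ is closed and discrete, hence countable and meager. On each component of $\mathbb{R}\setminus E$ every $A_i$ is either contained in $A_i$'s interior or disjoint from $A_i$, so every $\bm{1}_{A_i}$ is constant there. Thus $f$ is, off $E$, a step function whose jumps occur only at points of $E$. If $f$ is not constant modulo $\mathscr{M}$, then, by connectedness of $\mathbb{R}$, there is a point $p\in E$ at which the one-sided values $f(p^-)$ and $f(p^+)$ differ; let $c:=f(p^+)-f(p^-)\neq 0$. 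Choose $\varepsilon>0$ such that $(p-2\varepsilon,p+2\varepsilon)\cap E=\{p\}$.

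Apply \ref{item:4general} to obtain $x\in\mathcal{D}$ with $0<|x_i|<\varepsilon$ for all $i$, and set $g:=\sum_i\bm{1}_{A_i+x_i}$. On $(p-\varepsilon,p+\varepsilon)$:
\begin{itemize}
\item each $\bm{1}_{A_i}$ with $p\notin\partial A_i$ is constant, and so is its translate;
\item each $\bm{1}_{A_i}$ with $p\in\partial A_i$ has a single jump $\sigma_i\in\{\pm1\}$ at $p$, which moves to $p+x_i\neq p$ in the translate.
\end{itemize}
So $g$ is a step function on this window with jumps only at the points $p+x_i$. Its jump at $q$ is $\sum_{i:\,p+x_i=q}\sigma_i$, and these jumps sum to $\sum_i\sigma_i=c$. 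Since $g=f \bmod{\mathscr{M}}$, and two step functions that agree off a meager set agree on a dense set, they have the same jumps. But $f$ jumps only at $p$, while $g$ has no jump at $p$. Then all jumps of $g$ must vanish, which forces $c=0$, a contradiction.

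Hence $f=h \bmod{\mathscr{M}}$ for some integer $h\in\{0,\ldots,k\}$. If $h=0$, every $A_i$ would be meager; if $h=k$, every $A_i$ would be comeager. Both are excluded by hypothesis, so $h\in\{1,\ldots,k-1\}$.

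I expect the main obstacle to be making the ``one-sided values and jumps'' bookkeeping rigorous modulo $\mathscr{M}$. The point is that near $p$ each $A_i$ agrees, off the meager set $E\cup(E+x_i)$, with a half-line intersected with the window. I would check carefully that equality modulo meager sets of two such step functions forces equality of their jump data, using that nonempty open intervals are not meager (Baire category theorem).
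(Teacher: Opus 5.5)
Your proof is correct in substance, and its main step takes a different and much shorter route than the paper's. One auxiliary claim, however, is false and must be replaced.

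\textbf{The false claim.} In \ref{item:2general}$\Rightarrow$\ref{item:3general} you assert that $\mathcal{D}(A_1,\ldots,A_k)$ is a subgroup of $\mathbb{R}^k$. This fails in general; the paper's Lemma~\ref{lem:DAsubgroup} only claims it for $\mathcal{D}(A_1,\ldots,A_k)\cap\Delta_k$. Your justification breaks because
$$\sum_i\bm{1}_{A_i+x_i+y_i}(z)=\sum_i\bm{1}_{A_i+y_i}(z-x_i)$$
applies a different shift in each coordinate, so it is not a translate of $\sum_i\bm{1}_{A_i+y_i}$.

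A concrete counterexample within the theorem's hypotheses: take $A_1:=(0,1)+3\mathbb{Z}$ and $A_2:=A_1+1$. Then $(1,-1)\in\mathcal{D}(A_1,A_2)$, since the two translates are simply swapped. But $A_1+2=A_2-2=A_1-1=A_2+1=(2,3)+3\mathbb{Z}$. So for both $(2,-2)$ and $(-1,1)$ the shifted sum equals $2\cdot\bm{1}_{(2,3)+3\mathbb{Z}}$, which differs from $\bm{1}_{A_1}+\bm{1}_{A_2}$ on a nonmeager set. Hence $(2,-2),(-1,1)\notin\mathcal{D}(A_1,A_2)$.

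\textbf{The repair.} You only ever add diagonal vectors, and that case is valid. Indeed $\sum_i\bm{1}_{A_i+x_i+t}=\bigl(\sum_i\bm{1}_{A_i+x_i}\bigr)(\cdot-t)$ is a uniform translate. So $x\in\mathcal{D}$ and $(t,\ldots,t)\in\mathcal{D}$ together give $x+(t,\ldots,t)\in\mathcal{D}$. With this replacement, both inclusions in \ref{item:3general} go through.

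\textbf{A minor slip.} If $p\in\partial A_i$, the jump $\sigma_i$ of $\bm{1}_{A_i}$ at $p$ can be $0$. This happens, for example, when $A_i$ is locally a punctured interval, or when $p$ is an isolated point of $A_i$. So $\sigma_i\in\{-1,0,1\}$, which changes nothing in your argument.

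\textbf{A simpler finish.} The jump bookkeeping you worried about can be avoided. Put $\delta:=\min_i|x_i|$. Then $g$ is constant on $(p-\delta,p+\delta)$, while $f$ takes two different constant values on the nonmeager intervals $(p-\delta,p)$ and $(p,p+\delta)$. This contradicts $g=f \bmod{\mathscr{M}}$.

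\textbf{Comparison with the paper.} The paper proves \ref{item:4general}$\Rightarrow$\ref{item:1general} as follows.
\begin{enumerate}
\item It splits $\mathbb{R}$ into the level sets $\{f=j\}$ and writes each as a union over blocks of intersections of the $A_i^{\varepsilon_i}$.
\item It shows each level set is meager or comeager via Proposition~\ref{prop:generaldimensionalcasenew}.
\item That proposition needs a reduction to regular open sets, and an extremal point $a$ bounding a gap of the union.
\item It also needs Remark~\ref{rmk:lemmaonedimension} for the blocks attaining $a$, and Lemma~\ref{lem:mainideaintersections} with an auxiliary half-line $H_j$ for the remaining blocks. That lemma requires complements without isolated points.
\end{enumerate}

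You instead work with the step function $f$ directly. Isolation of the boundaries localizes the problem at a single jump point $p$, and any translate with all shifts nonzero and small is constant near $p$. This avoids all of the machinery above.

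Your argument is also insensitive to the weights. Under \eqref{eq:uglyconstraint}, the same reasoning shows that $\sum_i\alpha_i\bm{1}_{A_i}$ is constant modulo $\mathscr{M}$ for arbitrary nonzero $\alpha_i$, whenever $\mathcal{D}_{\alpha_1,\ldots,\alpha_k}(A_1,\ldots,A_k)$ has points arbitrarily close to the origin with all coordinates nonzero.

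What the paper's route buys in exchange is Proposition~\ref{prop:generaldimensionalcasenew} itself, a statement about arbitrary unions of intersections of translates that is of independent interest.

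Your binomial-count proof of \ref{item:1general}$\Leftrightarrow$\ref{item:2general} is a clean, self-contained version of the paper's counting.
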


In particular, in the case of $k=3$ sets, the above characterization implies that the existence of arbitrarily small vectors $x=(x_1,x_2,x_3)\in \mathcal{D}(A_1,A_2,A_3)$ with $x_1x_2x_3\neq 0$ is equivalent to exactly one of the following conditions: either $\{A_1,A_2,A_3\}$ is a partition of $\mathbb{R}$ modulo $\mathscr{M}$ (choosing $h=1$ in item \ref{item:1general}) or $\{A_1\cap A_2, A_2\cap A_3, A_1\cap A_3\}$ is a partition of $\mathbb{R}$ modulo $\mathscr{M}$ (choosing $h=2$ in item \ref{item:1general}). Notice that the latter situation is precisely what happens in Example \ref{example:3B}. In addition, differently from the case $k=2$, we do not have a representation of $\mathcal{D}(A_1,\ldots,A_k)$ which is \textquotedblleft more explicit\textquotedblright\, than $\Delta_k+\mathcal{D}(A_2,\ldots,A_k)$ in item \ref{item:3general}, provided that $k\ge 3$ (which is also evident in Example \ref{example:3B}).

\begin{rmk}\label{rmk:nonemptyaccboundary}
Since the sets $\mathcal{D}(\cdot,\ldots,\cdot)$ are invariant modulo meager modifications, the hypothesis \eqref{eq:uglyconstraint} can be replaced by the following weaker requirement: 
$$
\forall i \in \{1,\ldots,k\}, \exists B_i\subseteq \mathbb{R}, \quad 
A_i\sim B_i\,\, \text{ and }\,\,(\partial B_i)^\prime=\emptyset.
$$
The latter is satisfied by additional examples. For instance, if $C\subseteq [0,1]$ is the ternary Cantor set, then $A:=C^c$ satisfies $(\partial A)^\prime=\partial A=C$, hence all boundary points of $A$ are not isolated. On the other hand, $A\sim \mathbb{R}$ and, of course, $\partial \mathbb{R}=\emptyset$. 

On the other hand, it is quite possible that a set $A\subseteq \mathbb{R}$ with the Baire property satisfies $(\partial B)^\prime\neq \emptyset$ for all $B\subseteq \mathbb{R}$ with $A\sim B$. For instance, the set 
\begin{equation}\label{eq:Astar}
A_\star:=\bigcup_{n\ge 1}\left(\left(\frac{1}{2^{2n+1}}, \frac{1}{2^{2n}}\right)\cup \left(1-\frac{1}{2^{2n}}, 1-\frac{1}{2^{2n+1}}\right)\right).
\end{equation}
satisfies $\{0,1\}\subseteq (\partial B)^\prime$ for all $B\subseteq \mathbb{R}$ such that $A_\star\sim B$. 
\end{rmk}

The formulation of items \ref{item:4general} and \ref{item:6general} cannot be weakened to the ordinary condition 
$(0,\ldots,0)\in \mathcal{D}(A_1,\ldots,A_k)^\prime$. Indeed, let
$$
A_1:=(-\infty,0),\qquad A_2:=[0,\infty),\qquad A_3:=(0,1)+2\mathbb{Z}.
$$
Then each $A_i$ has the Baire property, is neither meager nor comeager, and satisfies $(\partial A_i)^\prime=\emptyset$. Moreover, for every positive sequence $(\varepsilon_n)$ converging to $0$, the points
$$
x_n:=(\varepsilon_n,\varepsilon_n,0)
$$
belong to $\mathcal{D}(A_1,A_2,A_3)$. Hence $(0,0,0)\in \mathcal{D}(A_1,A_2,A_3)^\prime$ in the ordinary sense. However,
$$
\sum_{i=1}^3 \bm{1}_{A_i}=1+\bm{1}_{A_3}
$$
is not constant modulo $\mathscr{M}$, so item \ref{item:2general} fails. Thus the requirement that all coordinates are nonzero is essential.

In the next result, we show that items \ref{item:1general}, \ref{item:2general}, \ref{item:3general}, and \ref{item:5general} are actually equivalent, independently of Condition \eqref{eq:uglyconstraint}.
\begin{prop}\label{prop:equivalentE1E5}
Given an integer $k\ge 2$, let $A_1,\ldots,A_k\subseteq \mathbb{R}$ be sets with the Baire property such that each $A_i$ is neither meager nor comeager. Then items \ref{item:1general}, \ref{item:2general}, \ref{item:3general}, and \ref{item:5general} are equivalent.
\end{prop}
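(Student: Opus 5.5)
The plan is to prove the cycle \ref{item:1general}$\Leftrightarrow$\ref{item:2general}$\Leftrightarrow$\ref{item:5general}$\Leftrightarrow$\ref{item:3general}. It is organized around the counting function $f:=\sum_{i=1}^k \bm{1}_{A_i}$, which has the Baire property. Translation is a homeomorphism of $\mathbb{R}$, so it preserves $\mathscr{M}$. Since $\bm{1}_{A+x}(z)=\bm{1}_A(z-x)$, a vector $x\in\mathbb{R}^k$ belongs to $\mathcal{D}(A_1,\ldots,A_k)$ if and only if
$$
\sum_{i=1}^k \bm{1}_{A_i}(z-x_i)=f(z) \bmod{\mathscr{M}}.
$$

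\ref{item:1general}$\Leftrightarrow$\ref{item:2general}. For fixed $h$, the union of the sets $\bigcap_{i\in I}A_i$ with $|I|=h$ is exactly $\{f\ge h\}$. For distinct $I,J$ of size $h$, the intersection of the corresponding sets is $\bigcap_{i\in I\cup J}A_i$ with $|I\cup J|>h$, and these intersections together cover $\{f\ge h+1\}$. Hence the family in \ref{item:1general} is a partition modulo $\mathscr{M}$ if and only if $\{f<h\}$ and $\{f>h\}$ are both meager, i.e.\ $f=h \bmod{\mathscr{M}}$. The admissible range $1\le h\le k-1$ is the same in both items.

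\ref{item:2general}$\Leftrightarrow$\ref{item:5general}. If $f=h \bmod{\mathscr{M}}$, then for every $d\in\mathbb{R}$ we have $\sum_i\bm{1}_{A_i}(z-d)=f(z-d)=h=f(z)\bmod{\mathscr{M}}$, so $\Delta_k\subseteq\mathcal{D}(A_1,\ldots,A_k)$. Conversely, assume \ref{item:5general}, so that $f(\cdot-d)=f \bmod{\mathscr{M}}$ for all $d$. Then each level set $L_j:=\{f=j\}$, for $j=0,\ldots,k$, has the Baire property and satisfies $L_j+d\sim L_j$ for all $d$, i.e.\ $\mathcal{D}(L_j)=\mathbb{R}$. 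By Theorem \ref{thm:maincategorycaseone} (\ref{item:02firstthm}$\Rightarrow$\ref{item:01firstthm}), each $L_j$ is meager or comeager. Since the $L_j$ partition $\mathbb{R}$, exactly one of them, say $L_h$, is comeager, and so $f=h\bmod{\mathscr{M}}$. Finally $h\ne 0$, since otherwise every $A_i$ would be meager, and $h\neq k$, since otherwise every $A_i$ would be comeager; both are excluded by hypothesis.

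\ref{item:5general}$\Leftrightarrow$\ref{item:3general}. Since $0\in\mathcal{D}(A_2,\ldots,A_k)$, \ref{item:3general} immediately gives $\Delta_k\subseteq\mathcal{D}(A_1,\ldots,A_k)$. For the converse, the key step is the invariance $\mathcal{D}+\Delta_k=\mathcal{D}$, where $\mathcal{D}:=\mathcal{D}(A_1,\ldots,A_k)$. Take $x\in\mathcal{D}$ and $d\in\mathbb{R}$. Substituting $w=z-d$ (legitimate because translation preserves $\mathscr{M}$), we get
$$
\sum_i\bm{1}_{A_i}(z-x_i-d)=f(z-d)=f(z) \bmod{\mathscr{M}},
$$
where the first equality uses $x\in\mathcal{D}$ and the second uses $(d,\ldots,d)\in\mathcal{D}$; so $x+(d,\ldots,d)\in\mathcal{D}$. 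Now write any $x\in\mathbb{R}^k$ as $x=(x_1,\ldots,x_1)+(0,x_2-x_1,\ldots,x_k-x_1)$. By the invariance, $x\in\mathcal{D}$ if and only if $(0,y)\in\mathcal{D}$ with $y=(x_2-x_1,\ldots,x_k-x_1)$. Because the first coordinate is zero, the term $\bm{1}_{A_1+0}-\bm{1}_{A_1}$ vanishes, so $(0,y)\in\mathcal{D}$ if and only if $y\in\mathcal{D}(A_2,\ldots,A_k)$. This yields exactly $\mathcal{D}=\Delta_k+(\{0\}\times\mathcal{D}(A_2,\ldots,A_k))$.

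The only nontrivial input is the appeal to Theorem \ref{thm:maincategorycaseone} in the step \ref{item:5general}$\Rightarrow$\ref{item:2general}. The rest is bookkeeping with level sets and translation invariance of $\mathscr{M}$; the small point to handle carefully is excluding $h\in\{0,k\}$.
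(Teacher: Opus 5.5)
Your proof is correct and is in substance the paper's. The only nontrivial step, \ref{item:5general}$\Rightarrow$\ref{item:2general} via the level sets of $f$ and Theorem~\ref{thm:maincategorycaseone}, is identical, and your level-set treatment of \ref{item:1general}$\Leftrightarrow$\ref{item:2general} matches the paper's argument (the paper handles \ref{item:1general}$\Rightarrow$\ref{item:2general} by a counting identity instead). The one real variation is that you attach \ref{item:3general} to \ref{item:5general} through the invariance $\mathcal{D}+\Delta_k=\mathcal{D}$, whereas the paper derives \ref{item:3general} from $f=h \bmod{\mathscr{M}}$. Your route shows this equivalence holds for arbitrary sets and checks both inclusions explicitly; the paper writes out only $\subseteq$.
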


It is also worth noting that the stronger local conclusion 
$
\mathcal{D}(A_1,\ldots,A_k)\cap U=\Delta_k\cap U
$ 
near the origin is false in general, even under \eqref{eq:uglyconstraint}. For instance, let
$$
A_1:=(-\infty,0),\qquad A_2:=[0,\infty),\qquad 
A_3:=(-\infty,1),\qquad A_4:=[1,\infty).
$$
Then $\sum_{i=1}^4\bm{1}_{A_i}=2$ modulo $\mathscr{M}$, and therefore items \ref{item:1general}, \ref{item:2general}, \ref{item:3general}, and \ref{item:5general} hold. However, for every sufficiently small nonzero $t$ we have
$$
(t,t,0,0)\in \mathcal{D}(A_1,A_2,A_3,A_4)\setminus \Delta_4.
$$
Thus no neighborhood $U$ of $(0,0,0,0)$ satisfies 
$\mathcal{D}(A_1,A_2,A_3,A_4)\cap U=\Delta_4\cap U$. 

We leave it to the interested reader to check, 
in the same spirit of Theorem \ref{thm:maincategorycaseone}, whether the analogues of Theorem \ref{thm:twodimensional}, Theorem \ref{thm:generalcase}, and Proposition \ref{prop:equivalentE1E5} hold in an infinite-dimensional setting.

\section{Preliminaries}\label{sec:preliminaries}

\subsection{Topology and subgroups} 
Let us prove in this section some intermediate results, starting with the topological and the additive structure of 
$\mathcal{D}_{\alpha_1,\ldots,\alpha_k}(A_1,\ldots,A_k)$. 

\begin{lem}\label{lem:DAsubgroup}
Let $A_1,\ldots,A_k$ be subsets of a 
topological 
group $X$. Fix also nonzero reals $\alpha_1,\ldots,\alpha_k$. 
Then 
\begin{equation}\label{eq:subgroupX}
\left\{x \in X: \sum_{i=1}^k \alpha_i (\bm{1}_{A_i+x}-\bm{1}_{A_i})=0 \,\,\,\bmod{\mathscr{M}}\right\},
\end{equation}
namely, $\mathcal{D}_{\alpha_1,\ldots,\alpha_k}(A_1,\ldots,A_k) \cap \Delta_k$, is a subgroup of $X$. In particular, $\mathcal{D}(A)$ is a subgroup of $X$ for each $A\subseteq X$. 
\end{lem}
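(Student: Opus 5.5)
The plan is to verify the three subgroup axioms for the set $G$ in \eqref{eq:subgroupX}. Write $f:=\sum_{i=1}^k \alpha_i \bm{1}_{A_i}$. Since $\bm{1}_{A_i+x}(z)=\bm{1}_{A_i}(z-x)$, the defining condition of $G$ becomes
$$
f(\cdot-x)=f \bmod{\mathscr{M}}.
$$
In other words, $x\in G$ if and only if the set $N_x:=\{z\in X: f(z-x)\neq f(z)\}$ is meager.

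The one fact we need is that $\mathscr{M}$ is invariant under translations and under the inversion $z\mapsto -z$, and is closed under finite unions. This holds because translations and inversion are homeomorphisms of $X$, so they map nowhere dense sets to nowhere dense sets. Hence they also map meager sets to meager sets.

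\textbf{Identity.} We have $0\in G$, since $N_0=\emptyset$.

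\textbf{Inverses.} Suppose $x\in G$. Then
$$
N_{-x}=\{z: f(z+x)\neq f(z)\}=\{w-x: f(w)\neq f(w-x)\}=N_x-x,
$$
where we substituted $w=z+x$. This is a translate of a meager set, so it is meager, and therefore $-x\in G$.

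\textbf{Closure under addition.} Suppose $x,y\in G$. If $f(z-x-y)\neq f(z)$, then either $f(z-x-y)\neq f(z-y)$ or $f(z-y)\neq f(z)$. The first alternative means $z-y\in N_x$, that is, $z\in N_x+y$; the second means $z\in N_y$. Hence
$$
N_{x+y}\subseteq (N_x+y)\cup N_y,
$$
and the right-hand side is meager, so $x+y\in G$.

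The group need not be abelian. Since only left-or-right translations of the form $z\mapsto z-x$ are used consistently, the argument goes through in any topological group as long as the same side is kept throughout.

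\textbf{The special case.} Taking $k=1$ and $\alpha_1=1$ gives $G=\mathcal{D}(A)$, so $\mathcal{D}(A)$ is a subgroup of $X$. The identification of $G$ with $\mathcal{D}_{\alpha_1,\ldots,\alpha_k}(A_1,\ldots,A_k)\cap\Delta_k$ (identifying $x$ with $(x,\ldots,x)$) is immediate from the definitions.

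The only point requiring some care is the bookkeeping of which translate of $N_x$ appears, so that the invariance of $\mathscr{M}$ applies. There is no real obstacle in the argument.
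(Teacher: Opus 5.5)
Your proof is correct and is essentially the paper's argument: the identity is trivial, inverses come from translation invariance of $\mathscr{M}$, and closure under addition is the same telescoping split $\bm{1}_{A_i+x+y}-\bm{1}_{A_i}=(\bm{1}_{A_i+x+y}-\bm{1}_{A_i+y})+(\bm{1}_{A_i+y}-\bm{1}_{A_i})$, which you write as $N_{x+y}\subseteq (N_x+y)\cup N_y$. One notational point, since you explicitly claim the non-abelian case: the point you call $z-x-y$ must be $z-(x+y)=z-y-x$ for the step ``$z-y\in N_x$'' to be literally correct, and with that correction your argument holds in any topological group.
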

\begin{proof}
    Let $\mathcal{S}$ be the set defined in \eqref{eq:subgroupX}. Of course, we have $0 \in \mathcal{S}$. Fix $x \in \mathcal{S}$ and note that
    $$
    \forall y \in X, \quad \sum_{i=1}^k \alpha_i (\bm{1}_{A_i+x+y}-\bm{1}_{A_i+y})=0 \,\,\,\bmod{\mathscr{M}}.
    $$
    On the one hand, setting $y=-x$ we obtain that $-x \in \mathcal{S}$. On the other hand, if $y \in \mathcal{S}$, then 
    $$
    \sum_{i=1}^k \alpha_i (\bm{1}_{A_i+x+y}-\bm{1}_{A_i})
    =\sum_{i=1}^k \alpha_i (\bm{1}_{A_i+x+y}-\bm{1}_{A_i+y})+\sum_{i=1}^k \alpha_i (\bm{1}_{A_i+y}-\bm{1}_{A_i}),
    $$
    which is $0$ modulo $\mathscr{M}$. Therefore $x+y \in \mathcal{S}$. 
\end{proof}

Following \cite[p. 53]{MR1039321}, a topological space $X$ is said to be sequential if, for each subset $A\subseteq X$, $A$ is closed if and only if it is sequentially closed. 
\begin{lem}\label{lem:DAclosed}
Let $A_1,\ldots,A_k$ be subsets of a 
topological 
group $X$ which is a sequential space, and assume that they have the Baire property. Fix also nonzero reals $\alpha_1,\ldots,\alpha_k$. Then 
$$
\left\{(x_1,\ldots,x_k) \in X^k: \sum_{i=1}^k \alpha_i\,\bm{1}_{A_i+x_i}=0\,\,\,\,\bmod{\mathscr{M}}\right\}
$$
is closed. Consequently, $\mathcal{D}_{\alpha_1,\ldots,\alpha_k}(A_1,\ldots,A_k)$ is closed. 
\end{lem}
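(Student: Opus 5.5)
The plan is to prove directly that the complement of
$$
\mathcal{S}:=\left\{(x_1,\ldots,x_k) \in X^k: \sum_{i=1}^k \alpha_i\,\bm{1}_{A_i+x_i}=0\,\bmod{\mathscr{M}}\right\}
$$
is open in $X^k$. This avoids any use of sequential convergence, which matters because $X^k$ need not be sequential even when $X$ is. The idea is to replace each $A_i$ by a regular open representative, so that the ``bad'' set becomes open modulo meager sets and stable under small perturbations of the translation vector.

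\textbf{Trivial case.} First I dispose of the case in which some nonempty open subset of $X$ is meager. Then all its translates are meager. By the Banach category theorem (a union of open meager sets is meager), $X$ itself is meager. Hence every statement holds $\bmod{\mathscr{M}}$, so $\mathcal{S}=X^k$, which is closed. From now on I assume that every nonempty open set is nonmeager.

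\textbf{Open representatives.} Since $A_i$ has the Baire property, I fix an open $U_i$ with $A_i\sim U_i$. The boundary of an open set is closed nowhere dense, so $U_i^c\sim V_i:=\operatorname{int}(U_i^c)$. Translations are homeomorphisms, so they preserve $\sim$. Therefore, for each $I\subseteq\{1,\ldots,k\}$, the set of points whose membership pattern in the translates $A_i+x_i$ is exactly $I$ agrees modulo $\mathscr{M}$ with the open set
$$
W_I(x):=\bigcap_{i\in I}(U_i+x_i)\cap\bigcap_{i\notin I}(V_i+x_i).
$$
On that set the function $\sum_i\alpha_i\bm{1}_{A_i+x_i}$ equals $\sum_{i\in I}\alpha_i$.

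\textbf{Openness of the complement.} Take $x\notin\mathcal{S}$. The patterns are finitely many, so some $I$ with $\sum_{i\in I}\alpha_i\neq 0$ has $W_I(x)$ nonmeager, and in particular nonempty. Pick $z\in W_I(x)$. Then $z-x_i$ lies in the open set $U_i$ for $i\in I$, and in $V_i$ for $i\notin I$. By continuity of $(y_i)\mapsto z-y_i$, there are neighborhoods $N_i$ of $x_i$ such that $z\in W_I(y)$ for all $y\in N_1\times\cdots\times N_k$. For such $y$, the set $W_I(y)$ is a nonempty open set, hence nonmeager by the first step. On a set equivalent to it, the sum takes the nonzero value $\sum_{i\in I}\alpha_i$. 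Hence $y\notin\mathcal{S}$, and $\mathcal{S}$ is closed.

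\textbf{The consequence.} I apply the statement just proved to the $2k$ sets $A_1,\ldots,A_k,A_1,\ldots,A_k$ with coefficients $\alpha_1,\ldots,\alpha_k,-\alpha_1,\ldots,-\alpha_k$; nothing above required the sets to be distinct. This gives a closed set in $X^{2k}$. Then $\mathcal{D}_{\alpha_1,\ldots,\alpha_k}(A_1,\ldots,A_k)$ is its preimage under the continuous map $x\mapsto(x,0,\ldots,0)$, and so is closed.

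The only delicate point is the bookkeeping with complements: they must be replaced by the interiors $V_i$, so that every pattern set becomes open and the ``same witness $z$'' argument goes through. The possibility that $X$ is not a Baire space is handled entirely by the Banach category step.
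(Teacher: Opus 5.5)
Your proof is correct, and it takes a genuinely different route from the paper's. The paper also reduces to open $A_i$, but then argues with sequences. It takes $x^n\to\eta$ with each $x^n$ in the set and removes the meager set $S=\bigcup_i\partial(A_i+\eta_i)$. Off $S$, each value $\bm{1}_{A_i+x^n_i}(z)$ eventually equals $\bm{1}_{A_i+\eta_i}(z)$. So the ``bad set'' of $\eta$ lies in $S$ together with the countable union of the bad sets of the $x^n$, and is therefore meager.

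That argument shows only that the set is sequentially closed in $X^k$. Passing to closedness needs $X^k$, not just $X$, to be sequential. This is automatic for the metrizable groups where the lemma is actually used, but the stated hypothesis does not supply it, as you observed.

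You instead show directly that the complement is open. A witness point $z$ in an open pattern set $W_I(x)$ survives small perturbations of each coordinate separately, so the product topology on $X^k$ is handled directly. You combine this with the dichotomy that a topological group is either meager or has all nonempty open sets nonmeager, which follows from homogeneity and Banach's category theorem.

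The cost of your route is that dichotomy, plus the care of replacing complements by $V_i=\operatorname{int}(U_i^c)$. The paper needs neither, because the countability of the sequence lets it prove meagerness of the limit's bad set directly.

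The gain is that your argument uses no countability at all. It proves the lemma for arbitrary topological groups, without the sequential hypothesis and without Hausdorffness (uniqueness of limits). It also closes the small gap in the paper's final step.

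Your deduction that $\mathcal{D}_{\alpha_1,\ldots,\alpha_k}(A_1,\ldots,A_k)$ is closed is the same as the paper's: apply the result to $2k$ sets with coefficients $\pm\alpha_i$ and pull back along $x\mapsto(x,0,\ldots,0)$.
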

\begin{proof}
Suppose without loss of generality that each $A_i$ is open. Fix a convergent sequence $(x^n: n \in \omega)$ with values in $X^k$ such that
\begin{equation}\label{eq:forallnmeagrr}
\forall n \in \omega, \quad \sum_{i=1}^k \alpha_i\,\bm{1}_{A_i+x^n_i}=0\,\,\,\,\bmod{\mathscr{M}},
\end{equation}
where $x^n=(x^n_1,\ldots,x^n_k)$ for each $n \in \omega$. 
Denote by $(\eta_1,\ldots,\eta_k)$ its limit (recalling that topological groups are Hausdorff) and let $S_i$ be the set of boundary points of each $A_i+\eta_i$. Note that each $S_i$ is nowhere dense, hence $S:=\bigcup_i S_i \in \mathscr{M}$. In addition, for each $z \in X\setminus S$, there exists $n_{z} \in \omega$ such that 
$\bm{1}_{A_i+\eta_i}(z)=\bm{1}_{A_i+x^n_i}(z)$ for all $i\in \{1,\ldots, k\}$ and all integers $n\ge n_z$. Hence
\begin{displaymath}
    \begin{split}
        \left\{z\in X: \sum_{i=1}^k \alpha_i\,\bm{1}_{A_i+\eta_i}(z)\neq 0\right\}
        &\subseteq S\cup \left\{z\in X\setminus S: \sum_{i=1}^k \alpha_i\,\bm{1}_{A_i+\eta_i}(z)\neq 0\right\}\\
        &= S\cup \left\{z\in X\setminus S: \sum_{i=1}^k \alpha_i\,\bm{1}_{A_i+x^{n_z}_i}(z)\neq 0\right\}\\
        &\subseteq S\cup \bigcup_{n \in \omega}\left\{z\in X: \sum_{i=1}^k \alpha_i\,\bm{1}_{A_i+x^{n}_i}(z)\neq 0\right\}.
    \end{split}
\end{displaymath}
It follows by \eqref{eq:forallnmeagrr} and the fact that $\mathscr{M}$ is a $\sigma$-ideal that $\sum_{i=1}^k \alpha_i\,\bm{1}_{A_i+\eta_i}=0\,\,\,\,\bmod{\mathscr{M}}$. Therefore the required set is sequentially closed, which completes the proof. 

The second part follows since 
$$
\mathcal{D}_{\alpha_1,\ldots,\alpha_k}(A_1,\ldots,A_k)=\iota^{-1}[\mathcal{A}\cap \mathcal{B}],
$$
where 
$\iota: X^k\to X^{2k}$ is the continuous map given by $x\mapsto (x_1,\ldots,x_k,0,\ldots,0)$, 
$$
\mathcal{A}:=
\left\{(x_1,\ldots,x_{2k}) \in X^{2k}: \sum_{i=1}^k \alpha_i(\bm{1}_{A_i+x_i}-\bm{1}_{A_i+x_{k+i}})=0 \bmod{\mathscr{M}}\right\}
$$
is closed by the first part of the proof, 
and 
$$
\mathcal{B}:=\left\{(x_1,\ldots,x_{2k}) \in X^{2k}: x_{k+1}=x_{k+2}=\cdots=x_{2k}=0\right\},
$$
is closed as well.  
\end{proof}

We will use also the following well-known fact, see e.g. \cite[Exercise 21, p. 28]{MR1669668}. 
\begin{lem}\label{lem:closedadditivesubgroup}
    Let $Y$ be a closed additive subgroup of $\mathbb{R}$. Then either $Y=r\mathbb{Z}$ for some $r \in \mathbb{R}$ or $Y=\mathbb{R}$. 
\end{lem}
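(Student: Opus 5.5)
The plan is the standard proof via the least upper bound property of $\mathbb{R}$. We assume $Y\neq\{0\}$; otherwise $Y=0\cdot\mathbb{Z}$ and we are done. Since $Y$ is closed under negation, it contains a positive element. We then set
$$
r:=\inf\{y\in Y: y>0\},
$$
which is a well-defined nonnegative real, and we split into the cases $r>0$ and $r=0$.

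In the case $r>0$, we first show that $r\in Y$. If $r\notin Y$, the definition of infimum gives two elements $y_1>y_2$ of $Y$ with $r<y_2<y_1<2r$. Then $0<y_1-y_2<r$, and $y_1-y_2\in Y$ because $Y$ is a subgroup, which contradicts the choice of $r$. (Alternatively, $r\in Y$ follows directly from closedness of $Y$.) Hence $r\mathbb{Z}\subseteq Y$. For the reverse inclusion, take any $y\in Y$ and write $y=nr+s$ with $n=\lfloor y/r\rfloor$ and $s\in[0,r)$. Then $s=y-nr\in Y$, and minimality of $r$ forces $s=0$. Therefore $Y=r\mathbb{Z}$.

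In the case $r=0$, for every $\varepsilon>0$ there is some $y\in Y$ with $0<y<\varepsilon$, so $y\mathbb{Z}\subseteq Y$. Every real number lies within distance $y<\varepsilon$ of a multiple of $y$, hence $Y$ is $\varepsilon$-dense in $\mathbb{R}$ for every $\varepsilon$, and thus dense. Since $Y$ is closed, $Y=\overline{Y}=\mathbb{R}$.

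No step here is a real obstacle. The only point needing care is showing that the infimum $r$ is attained in the case $r>0$, and the subtraction argument above (or closedness) settles it. Closedness is used essentially only in the case $r=0$, where it turns density into $Y=\mathbb{R}$.
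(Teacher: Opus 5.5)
Your proof is correct and complete. It is the standard infimum argument: in the case $r>0$ you show $r\in Y$ by the subtraction trick and then divide with remainder, and in the case $r=0$ you get density and use closedness. The paper gives no proof of its own and simply cites the statement as a well-known textbook exercise.

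Your remark that closedness is needed only when $r=0$ is also right: your argument actually shows that every additive subgroup of $\mathbb{R}$ is either cyclic or dense.
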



\subsection{Empty intersections Vs small shifts} We continue with some preliminary results on the structure of the intersections $\bigcap_i (A_i+x_i)$, provided that $\bigcap_i A_i=\emptyset$. Hereafter, given a normed vector space $X$, we write $B(x,r):=\{y \in X: \|x-y\|<r\}$ for the open ball with center $x \in X$ and radius $r \ge 0$. 

\begin{lem}\label{lem:intersection}
Let $X$ be a normed vector space, and fix nonempty subsets $A_1,\ldots,A_k\subseteq X$ with $A_1\cap \cdots \cap A_k=\emptyset$. Fix also vectors $x_1,\ldots,x_k \in X$. Then
$$
(A_1+x_1)\cap \cdots \cap (A_k+x_k) \subseteq \bigcup_{i=1}^k \bigcup_{z \in \partial A_i}B(z,r), 
$$
where $r:=2\max\{\|x_1\|,\ldots,\|x_k\|\}$. 
\end{lem}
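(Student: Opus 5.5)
Fix $y \in (A_1+x_1)\cap\cdots\cap(A_k+x_k)$. We show directly that $y$ lies within distance $r$ of some boundary point of some $A_i$; the argument only uses the convexity of segments in a normed space. For each $i$ put $a_i:=y-x_i\in A_i$, so $\|y-a_i\|=\|x_i\|\le r/2$. Since $A_1\cap\cdots\cap A_k=\emptyset$, the point $y$ cannot belong to every $A_i$; choose $j$ with $y\notin A_j$. (If $y$ happened to lie in all $A_i$ this would contradict the hypothesis, so such a $j$ always exists.)

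Now consider the segment $\gamma(t):=a_j+t(y-a_j)$, $t\in[0,1]$, joining $a_j\in A_j$ to $y\notin A_j$. Let $t^\ast:=\sup\{t\in[0,1]:\gamma(t)\in A_j\}$ and $z:=\gamma(t^\ast)$. We claim $z\in\partial A_j$. Every neighborhood of $z$ contains points $\gamma(t)\in A_j$ with $t\le t^\ast$ arbitrarily close to $t^\ast$ (by definition of supremum, or $t=t^\ast$ itself). It also contains points not in $A_j$: if $t^\ast<1$, the points $\gamma(t)$ with $t>t^\ast$ close to $t^\ast$ are outside $A_j$, and if $t^\ast=1$ then $z=y\notin A_j$. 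Here continuity of $t\mapsto\gamma(t)$ is used. Hence $z$ is in both the closure of $A_j$ and the closure of its complement, i.e.\ $z\in\partial A_j$.

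Finally, $\|y-z\|=(1-t^\ast)\|y-a_j\|\le\|x_j\|\le r/2<r$ whenever $r>0$, so $y\in B(z,r)$. The degenerate case $r=0$ needs separate treatment. Then all $x_i=0$, and the left-hand side is $A_1\cap\cdots\cap A_k=\emptyset$, so the inclusion is trivial. This also explains why the statement uses the strict ball with radius $2\max\|x_i\|$ instead of $\max\|x_i\|$: the factor $2$ only provides slack for the strict inequality.

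The only delicate step is the identification of $z$ as a boundary point, in particular the endpoint case $t^\ast=1$ and the case where the supremum is not attained. That step is handled above by using closures rather than membership.
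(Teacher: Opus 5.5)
Your proof is correct. It uses the same basic mechanism as the paper: pick $j$ with $y\notin A_j$, then slide along a segment toward $y$ until the segment leaves $A_j$, and take that exit point as the boundary point. The two proofs differ in where the segment starts, and that changes what each needs.

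The paper starts from an element of the set $Z$ of approximate nearest points of $\overline{A_j}$ to $y$. These are points whose distance to $y$ exceeds $\inf_{a\in A_j}\|y-a\|$ by at most $r/4$. The paper shows $Z\cap\partial A_j\neq\emptyset$ and then bounds the distance by $\|x_j\|+r/4<r$. The slack $r/4$ is there because exact nearest points can fail to exist in infinite dimensions, as the paper's subsequent remark notes. The factor $2$ in $r$ leaves room for that slack.

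You start instead from the specific point $a_j=y-x_j\in A_j$, which is already within $\|x_j\|$ of $y$. So you need no nearest-point construction and no error term. Your supremum argument, using closures and covering the case $t^\ast=1$, identifies the boundary point cleanly.

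What you gain is a sharper estimate: $\|y-z\|\le\|x_j\|\le\max_i\|x_i\|$. So the inclusion holds even with closed balls of radius $\max_i\|x_i\|$, and the factor $2$ is pure slack. The paper's route additionally produces a boundary point that is nearly closest to $y$ within $\overline{A_j}$, but the lemma never uses that.

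Your separate $r=0$ case is automatic. If such a $y$ exists, then $x_j\neq 0$, because $a_j\in A_j$ while $y\notin A_j$.
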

\begin{proof}
    First, note that the claim holds if $x_1=\cdots=x_k=0$ or if $\bigcap_j (A_j+x_j)=\emptyset$. Hence, let us suppose hereafter that $r>0$. Fix a vector $x \in \bigcap_j (A_j+x_j)$. Since $\bigcap_j A_j=\emptyset$, there exists $i \in \{1,\ldots,k\}$ such that $x\notin A_i$. Let $Z$ be the set of all vectors $z$ in the closure of $A_i$ such that 
    \begin{equation}\label{eq:minimaldistancesets}
    \forall a \in A_i, \quad \|x-z\|\le \|x-a\|+\frac{r}{4}.
    \end{equation}
    Note that $Z$ is nonempty. In addition, $Z\cap \partial A_i\neq \emptyset$. To prove it, pick a vector $z \in Z$ and suppose that it is an interior point of $A_i$. Then there exists a maximal $\varepsilon_0 \in (0,1)$ such that $z_\varepsilon:=z+\varepsilon (x-z)$ belongs to $Z\cap A_i$ for all $\varepsilon \in (0,\varepsilon_0)$: in fact, $\|x-z_\varepsilon\|=(1-\varepsilon)\|x-z\|$. By the continuity of the norm, we have $z_{\varepsilon_0} \in Z \cap \partial A_i$. 
    To conclude, since $x \in A_i+x_i$, there exists $a_i \in A_i$ such that $x=a_i+x_i$. By the definition of $z_{\varepsilon_0}$, it follows that 
    $$
    \|x-z_{\varepsilon_0}\|\le \|x-a_i\|+\frac{r}{4}=\|x_i\|+\frac{r}{4}<r,
    $$
    therefore $x \in B(z_{\varepsilon_0},r)$. 
\end{proof}

\begin{rmk}
    It is worth recalling that the positive error term $r/4$ in Inequality \eqref{eq:minimaldistancesets} is necessary. Indeed, it is known that in every infinite-dimensional Banach space $X$ there exists a nonempty closed set $C\subseteq X$ and a vector $x \in X\setminus C$ such that $x$ has no nearest point in $C$, see e.g. \cite[Theorem 4.1]{MR1012624}. 
\end{rmk}

In the special case of sets $A_1,\ldots,A_k\subseteq \mathbb{R}$ with isolated boundary points, we have the following additional property. 
\begin{lem}\label{lem:mainideaintersections}
Fix open sets $A_1,\ldots,A_k\subseteq \mathbb{R}$ which satisfy \eqref{eq:uglyconstraint}, and suppose that $A_1\cap \cdots \cap A_k=\emptyset$ and that each set $A_i^c$ has no isolated point. Then, for all $z \in (\partial A_1)\cup \cdots \cup (\partial A_k)$, there exists $\varepsilon>0$ such that
$$
z\notin \partial \left(\bigcap_{i=1}^k(A_i+x_i)\right) 
$$
for all nonzero $x_1,\ldots,x_k \in (-\varepsilon,\varepsilon)$. 
In other words, either $z$ is an interior point of $\bigcap_{i} (A_i+x_i)$ or it is an interior point of its complement. 
\end{lem}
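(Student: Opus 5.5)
The plan is to localize around $z$, use \eqref{eq:uglyconstraint} together with the absence of isolated points of each $A_i^c$ to classify the local shape of every $A_i$ near $z$, and then compute $\bigcap_i(A_i+x_i)$ explicitly in a small window around $z$.

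\textbf{Step 1 (choice of $\delta$).} Fix $z\in\bigcup_i\partial A_i$. By \eqref{eq:uglyconstraint}, each $\partial A_i$ has no accumulation points, so $\bigcup_i \partial A_i$ is a discrete closed set. Hence there is $\delta>0$ such that $(z-2\delta,z+2\delta)$ contains no point of $\bigcup_i\partial A_i$ other than possibly $z$.

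\textbf{Step 2 (local shapes).} On each of the intervals $(z-2\delta,z)$ and $(z,z+2\delta)$, the set $A_i$ has no boundary point. By connectedness, each such interval is either contained in $A_i$ or disjoint from $A_i$. There are two cases.
\begin{itemize}
\item If $z\notin\partial A_i$, then $A_i\cap(z-2\delta,z+2\delta)$ is either the whole interval or empty.
\item If $z\in\partial A_i$, then $z\notin A_i$ because $A_i$ is open. The possibility that both half-intervals lie in $A_i$ is excluded, since then $z$ would be an isolated point of $A_i^c$. The possibility that neither lies in $A_i$ contradicts $z\in\partial A_i$.
\end{itemize}
Thus each $A_i$ coincides on $(z-2\delta,z+2\delta)$ with one of four types: full, empty, the left piece $L=(z-2\delta,z)$, or the right piece $R=(z,z+2\delta)$.

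\textbf{Step 3 (shifted window).} Set $\varepsilon:=\delta$ and take nonzero $x_1,\ldots,x_k\in(-\varepsilon,\varepsilon)$. Then inside the window $W:=(z-\delta,z+\delta)$, each shifted set has an explicit form:
\begin{itemize}
\item a shifted full set contains $W$;
\item a shifted empty set misses $W$;
\item a shifted left piece meets $W$ in $(z-\delta,\,z+x_i)$;
\item a shifted right piece meets $W$ in $(z+x_i,\,z+\delta)$.
\end{itemize}

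\textbf{Step 4 (case analysis).} Since $A_1\cap\cdots\cap A_k=\emptyset$, the local intersection of the unshifted types on $(z-2\delta,z+2\delta)$ must be empty.
\begin{itemize}
\item If some type is empty, then $\bigcap_i(A_i+x_i)\cap W=\emptyset$, so $z$ is interior to the complement.
\item Otherwise, since $z$ lies in some $\partial A_i$, both $L$ and $R$ must occur. Indeed, if only full and $L$-types occurred (or only full and $R$-types), the local intersection would be $L$ (or $R$), which is nonempty. In this case
$$
\bigcap_i(A_i+x_i)\cap W=(z+a,\,z+b)\cap W,\qquad a:=\max_{i\in I_R}x_i,\quad b:=\min_{i\in I_L}x_i,
$$
where $I_L,I_R$ are the index sets of the $L$- and $R$-types.
\end{itemize}

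\textbf{Step 5 (conclusion).} Because all $x_i$ are nonzero, $a\neq0$ and $b\neq0$. Therefore $z$ is not an endpoint of the interval $(z+a,z+b)$, and $z$ lies in neither $\partial$ of it within $W$. Concretely:
\begin{itemize}
\item if $a<0<b$, then $z$ is in the open interval, hence an interior point of the intersection;
\item otherwise $z$ lies outside the closed interval $[z+a,z+b]$ (or the interval is empty), hence $z$ is an interior point of the complement.
\end{itemize}
In either case $z\notin\partial\bigl(\bigcap_i(A_i+x_i)\bigr)$.

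The main point requiring care is Step 2. This is where both hypotheses are genuinely used: discreteness of the boundaries gives a uniform $\delta$, and the absence of isolated points of $A_i^c$ rules out the punctured-neighbourhood shape, which would otherwise allow $z$ to land on the boundary for some choices of signs of the $x_i$. The hypothesis that the $x_i$ are nonzero is used only in Step 5, to keep $z$ off the endpoints $z+a$ and $z+b$.
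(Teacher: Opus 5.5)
Your proof is correct and follows essentially the same route as the paper. Both arguments isolate $z$ from the other boundary points, use that $A_i^c$ has no isolated points to sort each $A_i$ near $z$ into one of four local shapes (full, empty, left piece, right piece), and then compute the shifted intersection near $z$ as an interval $(z+a,z+b)$ with $a,b\neq 0$. The only difference is minor: you use $A_1\cap\cdots\cap A_k=\emptyset$ to force both left- and right-type indices to occur, whereas the paper handles missing types with the artificial endpoints $\pm 2\varepsilon$ and so does not need that hypothesis at this point.
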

\begin{proof}
Fix $z \in T:=(\partial A_1)\cup \cdots \cup (\partial A_k)$. Thanks to \eqref{eq:uglyconstraint}, there exists $\varepsilon>0$ such that $|z-t|\ge 4\varepsilon$ for all $t \in T\setminus \{z\}$. Decreasing $\varepsilon$ if necessary, we may also assume that, for each $i\in \{1,\ldots,k\}$, exactly one of the following alternatives holds:
\begin{enumerate}[label={\rm (\roman{*})}]
\item $(z-4\varepsilon,z+4\varepsilon)\subseteq A_i$;
\item $(z-4\varepsilon,z+4\varepsilon)\subseteq A_i^c$;
\item $z\in \partial A_i$, $(z,z+4\varepsilon)\subseteq A_i$, and $(z-4\varepsilon,z)\subseteq A_i^c$;
\item $z\in \partial A_i$, $(z-4\varepsilon,z)\subseteq A_i$, and $(z,z+4\varepsilon)\subseteq A_i^c$.
\end{enumerate}
Indeed, if $z\notin \partial A_i$, then $z$ is an interior point of either $A_i$ or $A_i^c$. If $z\in \partial A_i$, then, since there are no other boundary points of $A_i$ in $(z-4\varepsilon,z+4\varepsilon)$, each side of $z$ is contained either in $A_i$ or in $A_i^c$. The two sides cannot both be contained in $A_i$, because then $z$ would be an isolated point of $A_i^c$, and they cannot both be contained in $A_i^c$, because then $z$ would not be a boundary point of $A_i$.

Now suppose that $x_1,\ldots,x_k$ are nonzero reals in $(-\varepsilon,\varepsilon)$. Define
\begin{displaymath}
\begin{split}
I^0&:=\{i\in \{1,\ldots,k\}:(z-4\varepsilon,z+4\varepsilon)\subseteq A_i^c\},\\
I^+&:=\{i\in \{1,\ldots,k\}: z\in \partial A_i \text{ and } (z,z+4\varepsilon)\subseteq A_i\},\\
I^-&:=\{i\in \{1,\ldots,k\}: z\in \partial A_i \text{ and } (z-4\varepsilon,z)\subseteq A_i\}.
\end{split}
\end{displaymath}
If $I^0\neq \emptyset$, then
$$
B(z,2\varepsilon)\cap \bigcap_{i=1}^k(A_i+x_i)=\emptyset,
$$
and therefore $z$ is an interior point of the complement of $\bigcap_i(A_i+x_i)$.

Assume now that $I^0=\emptyset$. The indices $i$ for which $(z-4\varepsilon,z+4\varepsilon)\subseteq A_i$ do not impose any restriction inside $B(z,2\varepsilon)$. Hence
\begin{displaymath}
\begin{split}
B(z,2\varepsilon)\cap \bigcap_{i=1}^k(A_i+x_i)
&=
B(z,2\varepsilon)\cap \bigcap_{i\in I^+}(z+x_i,z+4\varepsilon+x_i)
       \cap \bigcap_{i\in I^-}(z-4\varepsilon+x_i,z+x_i)\\
&=
(z+a,z+b),
\end{split}
\end{displaymath}
where
$$
a:=\max\left(\{-2\varepsilon\}\cup \{x_i:i\in I^+\}\right)
\quad\text{and}\quad
b:=\min\left(\{2\varepsilon\}\cup \{x_i:i\in I^-\}\right),
$$
with the convention that $(z+a,z+b)=\emptyset$ if $a\ge b$. Since each $x_i$ is nonzero, neither $a$ nor $b$ can be equal to $0$ unless it is one of the artificial endpoints $-2\varepsilon$ or $2\varepsilon$. In all cases, $z$ is either an interior point of $(z+a,z+b)$ or an interior point of its complement. Thus
$$
z\notin \partial\left(\bigcap_{i=1}^k(A_i+x_i)\right).
$$
\end{proof}

\subsection{Main technical tools} 
We proceed now to the main lemma which will allow to prove Theorem \ref{thm:distinctFS} and Theorem \ref{thm:twodimensional}. 
\begin{lem}\label{lem:twodimensionalcase}
Fix subsets $A_1,\ldots,A_k\subseteq \mathbb{R}$ 
such that each $A_i$ has the Baire property and is neither meager nor comeager. 
Suppose that there exists a sequence $(x_n)$ with values in $\mathbb{R}^k$ such that 
each $x_{n,i}$ is nonzero, and $\lim_n x_n=(0,\ldots,0)$, and 
$$
\forall n \in \omega, \quad 
(A_1+x_{n,1})\cap \cdots \cap (A_k+x_{n,k}) \sim A_1\cap \cdots \cap A_k.
$$
Then $A_1\cap \cdots \cap A_k$ is meager. 
\end{lem}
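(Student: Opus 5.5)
The plan is to reduce to regular open sets, where "$\sim$" becomes genuine equality, and then to derive a contradiction at an endpoint of a component interval of the intersection. Suppose, towards a contradiction, that $C:=A_1\cap\cdots\cap A_k$ is not meager.

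\emph{Reduction to regular open sets.} Since each $A_i$ has the Baire property, it agrees modulo $\mathscr{M}$ with a unique regular open set $U_i$. Translates of regular open sets are regular open, and finite intersections of regular open sets are regular open. Hence $V:=U_1\cap\cdots\cap U_k$ and $W_n:=\bigcap_i(U_i+x_{n,i})$ are regular open. Moreover $C\sim V$ and $\bigcap_i(A_i+x_{n,i})\sim W_n$. Two regular open sets which agree modulo meager sets coincide: if $O,O'$ are open and $O\sim O'$, then $O\setminus \overline{O'}$ is open and meager, hence empty, so $\overline{O}=\overline{O'}$ and $O=\operatorname{int}\overline{O}=O'$. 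The hypothesis therefore gives $V=W_n$ exactly, for every $n\in\omega$.

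\emph{Choice of a boundary point.} $V$ is nonempty because $C$ is not meager. $V$ is not dense: an open dense set is comeager, and then each $A_i\supseteq C$ would be comeager, against the hypothesis. Hence some component interval $J$ of $V$ has a finite endpoint. Replacing every $A_i$ by $-A_i$ and every $x_{n,i}$ by $-x_{n,i}$, we may assume that $J=(a,b)$ with $b\in\mathbb{R}$. Then $b\notin V$, so the set $I_b:=\{j: b\notin U_j\}$ is nonempty. For $i\notin I_b$ we have $b\in U_i$, and since $U_i$ is open there is $\eta>0$ with $(b-\eta,b+\eta)\subseteq U_i$ for all $i\notin I_b$. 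Also $J\subseteq U_i$ for every $i$. Finally, since $V$ is regular open and $b\notin V$, every right neighbourhood $(b,b+\delta)$ contains points outside $V$; otherwise $b$ would lie in $\operatorname{int}\overline{V}=V$.

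\emph{Sign analysis.} For $j\in I_b$, the inclusion $V\subseteq U_j+x_{n,j}$ gives $(a-x_{n,j},\,b-x_{n,j})\subseteq U_j$. If $x_{n,j}<0$, this interval contains $b$ as soon as $|x_{n,j}|<b-a$, contradicting $b\notin U_j$. Hence, for all large $n$, $x_{n,j}>0$ for every $j\in I_b$.

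\emph{Contradiction.} Fix such a large $n$, with all $|x_{n,i}|<\min(\eta,(b-a)/2)$. Let $m>0$ be the least positive value among the $x_{n,i}$; it exists because $I_b\ne\emptyset$. Take any $y\in(b,b+m)$ and check that $y\in U_i+x_{n,i}$ for each $i$:
\begin{itemize}
\item if $x_{n,i}>0$, then $y-x_{n,i}\in(a,b)=J\subseteq U_i$;
\item if $x_{n,i}<0$, then $i\notin I_b$ and $y-x_{n,i}\in(b,b+\eta)\subseteq U_i$.
\end{itemize}
Here $m$ should also be taken below $\eta$. Thus $(b,b+m)\subseteq W_n=V$, contradicting the fact that every right neighbourhood of $b$ meets $V^c$. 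Therefore $C$ is meager.

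The step I expect to be the main obstacle is the reduction to exact equality $V=W_n$. Without regular openness, one cannot exclude that $b$ is an isolated point of $V^c$, nor control the boundary behaviour of the $U_i$. The sign analysis is then what rules out mixed-sign shifts.
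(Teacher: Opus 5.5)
Your proof is correct and follows essentially the paper's argument: reduce to open sets so that the shifted intersection equals the original one exactly, then show that an endpoint of a component of the intersection is displaced by a nonzero shift. Your passage to regular open representatives is in fact what justifies $W_n=V$, since the paper's assertion that $U_n\triangle U$ is open fails for arbitrary open representatives (compare $(0,2)$ with $(0,1)\cup(1,2)$). The only slip is in the last step: for $x_{n,i}<0$ you need $(y-b)+|x_{n,i}|<\eta$, not merely $m<\eta$. This is fixed by taking $|x_{n,i}|<\eta/2$ and $y\in(b,b+\min(m,\eta/2))$, or more simply by checking that $y=b$ itself lies in $W_n=V$, which contradicts $b\notin V$.
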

\begin{proof}
Without loss of generality, each $A_i$ is open. Indeed, since each $A_i$ has the
Baire property, there exists an open set $O_i$ such that $A_i \sim O_i$, and the
hypothesis and the conclusion are invariant under replacing each $A_i$ by $O_i$.

Set
\[
        U := A_1 \cap \cdots \cap A_k
\]
and, for each $n \in \omega$,
\[
        U_n := (A_1+x_{n,1}) \cap \cdots \cap (A_k+x_{n,k}).
\]
Then $U$ and all the sets $U_n$ are open. By hypothesis, $U_n \sim U$ for all
$n \in \omega$. Hence, since $U_n \triangle U$ is open and meager, it is empty.
Thus $U_n=U$ for all $n \in \omega$. 

Suppose, for the sake of contradiction, that $U$ is not meager. Since $U$ is
open, it is nonempty. Moreover, $U \neq \mathbb R$, because otherwise
$A_i=\mathbb R$ for every $i \in \{1,\ldots,k\}$, contradicting the assumption
that the sets $A_i$ are not comeager. Hence $U$ is a nonempty proper open
subset of $\mathbb R$.

Let $(a,b)$ be a connected component of $U$. At least one of $a,b$ is finite.
We treat the case $a \in \mathbb R$; the case $b \in \mathbb R$ follows in the
same way, or equivalently by applying the argument to the reflected sets
$-A_1,\ldots,-A_k$.

For each $i \in \{1,\ldots,k\}$, let $(a_i,b_i)$ be the connected component of
$A_i$ which contains $(a,b)$. Then $a_i \leq a$ and $b_i \geq b$. Since
$(a,b)$ is a connected component of $U=\bigcap_{i=1}^k A_i$, we have
\[
        \max_{1 \leq i \leq k} a_i = a.
\]
Indeed, if this maximum were strictly smaller than $a$, then all sets $A_i$
would contain a nonempty interval immediately to the left of $a$, and hence
$U$ would strictly extend the component $(a,b)$ to the left.

Put
\[
        I := \{i \in \{1,\ldots,k\}: a_i=a\}.
\]
Then $I$ is nonempty. Fix a point $q \in (a,b)$. Choose $\varepsilon>0$ so small
that, whenever $|t_i|<\varepsilon$ for all $i$, the following two conditions hold:
\[
        q \in (a_i+t_i,b_i+t_i) \quad \text{for all } i \in \{1,\ldots,k\},
\]
and
\[
        \max_{1 \leq i \leq k}(a_i+t_i)
        =
        a+\max_{i \in I} t_i .
\]
This is possible because $q \in (a,b) \subseteq (a_i,b_i)$ for all $i$, and
$a_i<a$ for all $i \notin I$.

Now fix $n$ large enough so that $0<|x_{n,i}|<\varepsilon$ for all
$i \in \{1,\ldots,k\}$. Since $q \in (a_i+x_{n,i},b_i+x_{n,i})$ for all $i$, the
connected component of $U_n$ which contains $q$ is
\[
        \bigcap_{i=1}^k (a_i+x_{n,i},b_i+x_{n,i})
        =
        \left(
        \max_{1 \leq i \leq k}(a_i+x_{n,i}),
        \min_{1 \leq i \leq k}(b_i+x_{n,i})
        \right).
\]
By the choice of $\varepsilon$, the left endpoint of this component is
\[
        a+\max_{i \in I} x_{n,i}.
\]
Since all the numbers $x_{n,i}$ are nonzero, also
$\max_{i \in I}x_{n,i} \neq 0$. Therefore this left endpoint is different from
$a$.

On the other hand, since $U_n=U$ for all $n$, the connected component
of $U_n$ containing $q$ must be precisely the connected component $(a,b)$ of
$U$ containing $q$. In particular, its left endpoint must be $a$, a contradiction.

Therefore $U=A_1 \cap \cdots \cap A_k$ is meager, as required.
\end{proof}

\begin{rmk}\label{rmk:lemmaonedimension}
More precisely, the proof of the Lemma \ref{lem:twodimensionalcase} shows the following local
fact. Suppose that all sets $A_i$ are open, set $U:=\bigcap_i A_i$, and let
$(a,b)$ be a connected component of $U$ with $a\in\mathbb R$. For each
$i\in\{1,\ldots,k\}$, let $(a_i,b_i)$ be the connected component of $A_i$ which
contains $(a,b)$. Then $\max_i a_i=a$. Hence, for every $q\in(a,b)$ and all
sufficiently small $x=(x_1,\ldots,x_k)\in\mathbb R^k$, the connected component
of
$
        \bigcap_i(A_i+x_i)
$ 
which contains $q$ has left endpoint
\[
        a+\max\{x_i:a_i=a\}.
\]
In particular, if $x_n\to(0,\ldots,0)$ and $x_{n,i}\neq0$ for all $i,n$, then,
for all sufficiently large $n$, the set $\bigcap_i(A_i+x_{n,i})$ contains a
nonempty open interval of the form
\[
        \left(a+\max\{x_{n,i}:a_i=a\},q\right),
\]
provided that $a+\max\{x_{n,i}:a_i=a\}<q$; also, the leftmost point of this
interval is the left endpoint of the connected component of
$\bigcap_i(A_i+x_{n,i})$ containing $q$.
\end{rmk}

In the next result we prove an extension of Lemma \ref{lem:twodimensionalcase} under an additional restriction on the sets $A_i$, which will be needed for the proof of Theorem \ref{thm:generalcase}. 
\begin{prop}\label{prop:generaldimensionalcasenew}
Fix 
$A_1,\ldots,A_p\subseteq \mathbb{R}$ such that each $A_i$ has the Baire property, it is neither meager nor comeager, and $(\partial A_i)^\prime=\emptyset$. 
Let $\{P_1,\ldots,P_q\}$ be a partition of $\{1,\ldots,p\}$ into nonempty sets and suppose that there exists a sequence $(x_n: n \in \omega)$ with values in $\mathbb{R}^p$ such that  
each $x_{n,i}$ is nonzero, $\lim_n x_n=(0,\ldots,0)$, and 
\begin{equation}\label{eq:claimedequality}
\forall n \in \omega, \quad \bigcup_{j=1}^q \bigcap_{i \in P_j} (A_i+x_{n,i}) \sim \bigcup_{j=1}^q \bigcap_{i \in P_j} A_i.
\end{equation}

Then $\bigcup_{j=1}^q \bigcap_{i \in P_j} A_i$ is either meager or comeager. 
\end{prop}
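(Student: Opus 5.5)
The plan is to reduce to open sets with a rigid local structure near boundary points, pick a genuine ``endpoint'' $z$ of $V:=\bigcup_j\bigcap_{i\in P_j}A_i$, and show that small shifts with all coordinates nonzero necessarily move this endpoint. That contradicts \eqref{eq:claimedequality}.

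\emph{Normalization.} First I will replace each $A_i$ by the regular open set $\mathrm{int}\,\overline{A_i}$. Since $(\partial A_i)'=\emptyset$, the set $\partial A_i$ is closed and discrete, hence countable and nowhere dense in $\mathbb{R}$. Therefore $A_i\sim \mathrm{int}\,\overline{A_i}$, and also $\partial(\mathrm{int}\,\overline{A_i})\subseteq \partial A_i$. So the hypotheses (Baire property, neither meager nor comeager, $(\partial A_i)'=\emptyset$) and the conclusion are preserved, and \eqref{eq:claimedequality} is preserved because it is invariant under meager modifications. After this step each $A_i$ is open, its boundary is discrete, and $A_i^c$ has no isolated points.

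\emph{Setting up the contradiction.} Put $W_j:=\bigcap_{i\in P_j}A_i$, $V:=\bigcup_j W_j$, and let $V_n$ be the analogous set built from the shifted sets $A_i+x_{n,i}$. All of these are open, and $\partial V\subseteq T:=\bigcup_i\partial A_i$, which is discrete. Assume that $V$ is neither meager nor comeager. Then $V$ is nonempty and open, and $V^c$ has nonempty interior. Since $\mathbb{R}$ is connected and $\partial V$ is discrete, I will find $z\in\partial V$ and $\delta>0$ such that $V$ lies on exactly one side of $z$ near $z$. Say $(z,z+\delta)\subseteq V$ and $(z-\delta,z)\cap V=\emptyset$; the other case follows by reflection. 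To find such a $z$, take an endpoint of a component of $V$ that borders an interval of $V^c$, and use discreteness of $\partial V$ to rule out accumulation.

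\emph{Local analysis of $V_n$ near $z$.} As in the proof of Lemma~\ref{lem:mainideaintersections}, I choose $\varepsilon>0$ such that each $A_i$ satisfies exactly one of the alternatives (i)--(iv) on $(z-4\varepsilon,z+4\varepsilon)$. This is where the absence of isolated points of $A_i^c$ is used. Repeating that computation block by block (the hypothesis $\bigcap A_i=\emptyset$ of Lemma~\ref{lem:mainideaintersections} is not needed for this local computation), for large $n$ we get
\[
B(z,2\varepsilon)\cap\bigcap_{i\in P_j}(A_i+x_{n,i})=(z+a_j,z+b_j)\quad\text{or}\quad\emptyset,
\]
where each of $a_j,b_j$ is either one of the artificial endpoints $\pm2\varepsilon$ or equal to some $x_{n,i}\neq0$. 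Hence $z$ is not a boundary point of any shifted block. It follows that $z$ is either interior to some shifted block, and so interior to $V_n$, or it has a neighbourhood disjoint from every shifted block, and so is interior to $V_n^c$.

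\emph{Conclusion, and where the difficulty lies.} Fix such a large $n$. Near $z$, both $V_n$ and $V$ are finite unions of intervals, so they have only finitely many boundary points in $B(z,2\varepsilon)$. I pick $0<\eta<\delta$ so small that $V_n$ has no boundary point in $(z-\eta,z)\cup(z,z+\eta)$. Then $V_n\cap(z-\eta,z+\eta)$ is one of four sets: the whole interval, the empty set, or the interval with $z$ removed on one side in one of two ways. Since $V_n\sim V$, and $V$ fills $(z,z+\eta)$ while missing $(z-\eta,z)$, we must have $V_n\supseteq(z,z+\eta)$ and $V_n\cap(z-\eta,z)=\emptyset$. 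Thus $z\in\partial V_n$, which contradicts the previous step. I expect the main obstacle to be the bookkeeping in the local analysis. One must check that, with nonzero shifts, no block endpoint $z+a_j$ or $z+b_j$ can land exactly on $z$, and that a union of blocks cannot create a boundary point at $z$ when no single block does. The normalization, which makes every $A_i$ one-sided at each of its boundary points, is exactly what makes this case analysis close.
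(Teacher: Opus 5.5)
Your proof is correct. It uses the same normalization and the same critical point as the paper: your $z$ is the paper's $a$, the right end of an interval of $V^c$ with $V$ filling a one-sided neighbourhood to its right. The difference is in how the contradiction at that point is obtained.

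\textbf{The paper's route.} It introduces the functional $\alpha(G)=\inf(G\cap(a-\varepsilon,\infty))$ and derives $\alpha(V_n)=a$ from $V_n\sim V$. It then splits the blocks into two kinds.
\begin{itemize}
\item For blocks with $\alpha(U_j)=a$, it locates the left endpoint of the shifted block via Remark~\ref{rmk:lemmaonedimension}, together with an appeal to ``maximality modulo meager sets''.
\item For the remaining blocks, it applies Lemma~\ref{lem:mainideaintersections} after adjoining an auxiliary interval $H_j$, so that the empty-intersection hypothesis holds and $a$ becomes a boundary point of a member of the family.
\end{itemize}
From these it concludes $\alpha(V_{n,j})>a$ for every $j$, hence $\alpha(V_n)>a$, a contradiction.

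\textbf{Your route.} You notice that the hypothesis $\bigcap A_i=\emptyset$ is never used in the local computation inside the proof of Lemma~\ref{lem:mainideaintersections}. You therefore run that computation once, uniformly, for every block. This gives $z\notin\partial V_{n,j}$ for all $j$, hence $z\notin\partial V_n$. That single fact replaces the case split, the functional $\alpha$, the auxiliary intervals $H_j$, and the maximality argument. The only cost is that you reopen a proof instead of citing the lemma as stated, which is harmless here.

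\textbf{Minor points.}
\begin{itemize}
\item The existence of $z$ deserves one explicit line. Take a component $(e,f)$ of $\operatorname{Int}(V^c)$ with, say, $f$ finite. Then $f\in\partial V$, $f$ is isolated in $\partial V$, and $f$ is not interior to $V^c$. Hence $V\supseteq(f,f+\delta)$ for small $\delta>0$.
\item Your final step can be shortened. Once $z$ is interior to $V_n$ or to $V_n^c$, one of $V_n\setminus V$ or $V\setminus V_n$ already contains a nonempty open interval adjacent to $z$. There is no need to choose $\eta$ or enumerate local shapes.
\item Your list of ``four'' local shapes omits the punctured interval. Nothing in the argument depends on that list, so this does not affect correctness.
\end{itemize}
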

\begin{proof}
%
Note that \(\partial A_i\) is nowhere dense, hence meager. Define $O_i:=\operatorname{Int}(\overline{A_i})$ for each $i \in \{1,\ldots,p\}$. 
Then \(A_i\triangle O_i\subseteq \partial A_i\), and also 
$ 
\partial O_i\subseteq \partial A_i. 
$ 
Thus \(A_i\sim O_i\), \(O_i\) is regular open, and \((\partial O_i)^\prime=\emptyset\). Since the statement is invariant under replacing each \(A_i\) by a meager equivalent set, we may assume without loss of generality that each \(A_i\) is regular open and satisfies \((\partial A_i)^\prime=\emptyset\). 
For notational convenience, define 
$$
U_j:=\bigcap_{i \in P_j} A_i
\quad \text{ and }\quad 
U:=\bigcup_{j=1}^q U_j
$$
for all $j\in \{1,\ldots,q\}$. Similarly, define
$$
V_{n,j}:=\bigcap_{i \in P_j} (A_i+x_{n,i})
\quad \text{ and }\quad 
V_n:=\bigcup_{j=1}^q V_{n,j}
$$
for all  $n \in \omega$ and $j\in \{1,\ldots,q\}$. Lastly, set \(F:=\bigcup_{i=1}^p \partial A_i\). Then all points of \(F\) are isolated, and 
$
\partial U_j\cup \partial U \subseteq F
$ 
for every \(j\). 
The standing hypothesis \eqref{eq:claimedequality} can be rewritten as $V_n \sim U$ for all $n \in \omega$. 
Suppose hereafter for the sake of contradiction that $U$ is neither meager nor comeager. 

Notice that all the sets $A_i$ and $U$ are (nonempty finite or countably infinite) unions of open intervals.  Notice that all sets $A_i$ and $U$ cannot be dense (indeed, in the opposite case, they would be open dense sets, hence comeager). 
In particular, the complement $U^c$ has nonempty interior. Let $(x,y)$ be a nonempty open interval contained in $U^c$, and define
\begin{equation}\label{eq:definitiona}
a:=\sup\{z \in \mathbb{R}\cup \{\infty\}: (x,z)\cap U \in \mathscr{M}\}. 
\end{equation}

In other words, $(x,a)$ is an open interval contained in $U^c$, with $a$ maximal modulo meager sets. Suppose that $a \in \mathbb{R}$ (the case \(a=\infty\) is handled symmetrically, by reflecting the real line and applying the same argument to the reflected sets). 
Since the points of $F$ are isolated, there exists a sufficiently small $\varepsilon>0$ such that $B(a,2\varepsilon) \cap F=\{a\}$, so that 
$(a-\varepsilon,a) \cap U =\emptyset$ and $(a,a+\varepsilon)\subseteq U$.

At this point, for each open set $G\subseteq \mathbb{R}$ define 
$$\alpha(G):=\inf (G\cap (a-\varepsilon,\infty))$$ 
where, as usual, we assume the convention that $\alpha(G)=+\infty$ if $\sup(G)\le a-\varepsilon$. 
Note that the maximality of $a$ implies that 
\begin{equation}\label{eq:maximala}
a=\alpha(U)=
\min_{j \in \{1,\ldots,q\}} \, \alpha(U_j), 
\end{equation}
and $\alpha(U_j)\ge \max_{i \in P_j} \, \alpha(A_i)$ for all $j \in \{1,\ldots,q\}$. 
In particular, we have that $\alpha(U_j)\ge a$ for all $j \in \{1,\ldots,q\}$ by \eqref{eq:maximala} and that 
the set
$$
J:=\left\{j \in \{1,\ldots,q\}: \alpha(U_j)=a\right\}.
$$ 
is nonempty. On the other hand, if $j \in \{1,\ldots,q\}\setminus J$, then $\alpha(U_j) \ge a+2\varepsilon$. Similarly, as in \eqref{eq:maximala}, since $V_n \sim U$ for all $n\in \omega$, we have 
\begin{equation}\label{eq:maximalb}
\forall n \in \omega, \quad a=\alpha(V_n)=\min_{j \in \{1,\ldots,q\}}\alpha(V_{n,j}).
\end{equation}
Indeed, if \(\alpha(V_n)<a\), then \(V_n\setminus U\) contains a nonempty open interval contained in \((a-\varepsilon,a)\), contradicting \(V_n\sim U\). If \(\alpha(V_n)>a\), then \(U\setminus V_n\) contains a nonempty open interval contained in \((a,a+\varepsilon)\), again contradicting \(V_n\sim U\).

Lastly, fix an integer $n_0 \in \omega$ such that 
$0<|x_{n,i}|<\varepsilon/4$ for all $i \in \{1,\ldots,p\}$ and $n\ge n_0$. 
In particular, it follows by \eqref{eq:maximalb} that, for each $n\ge n_0$, there exists $j \in \{1,\ldots,q\}$ such that $\alpha(V_{n,j})=a$. 

To conclude, we show that, after increasing \(n_0\) if necessary,
\[
\forall n\ge n_0,\ \forall j\in\{1,\ldots,q\},\qquad \alpha(V_{n,j})>a.
\]

First, fix \(j\in J\), so that \(\alpha(U_j)=a\). Since \(B(a,2\varepsilon)\cap F=\{a\}\), we have
\[
(a-\varepsilon,a)\cap U_j=\emptyset
\quad\text{and}\quad
(a,a+\varepsilon)\subseteq U_j.
\]
Applying Lemma \ref{lem:twodimensionalcase}, more precisely the local conclusion recorded in Remark \ref{rmk:lemmaonedimension}, to the family \((A_i:i\in P_j)\), we may increase \(n_0\) so that, for every \(n\ge n_0\), there is a nonempty set \(I_{n,j}\subseteq P_j\) such that
\[
\left(a+\max_{i\in I_{n,j}}x_{n,i},a+\frac{\varepsilon}{2}\right)\subseteq V_{n,j},
\]
and the leftmost point \(a+\max_{i\in I_{n,j}}x_{n,i}\) is maximal modulo meager sets.

Since \(V_n\sim U\) and \((a-\varepsilon,a)\cap U=\emptyset\), we must have 
$
V_{n,j}\cap(a-\varepsilon,a)=\emptyset.
$ 
Hence 
$
a+\max_{i\in I_{n,j}}x_{n,i}\ge a.
$ 
Because all \(x_{n,i}\) are nonzero, the maximum cannot be \(0\). Therefore 
$ 
\max_{i\in I_{n,j}}x_{n,i}>0,
$ 
and the maximality assertion from Remark \ref{rmk:lemmaonedimension} gives
\[
\alpha(V_{n,j})=a+\max_{i\in I_{n,j}}x_{n,i}>a.
\]

Now fix \(j\notin J\). Then \(\alpha(U_j)>a\); indeed, by the choice of \(\varepsilon\), either \(\alpha(U_j)=+\infty\), or \(\alpha(U_j)\ge a+2\varepsilon\). Put 
$ 
H_j:=(a,\alpha(U_j)),
$ 
with the convention that \(H_j=(a,\infty)\) if \(\alpha(U_j)=+\infty\). Then
\[
H_j\cap U_j=\emptyset.
\]
Apply Lemma \ref{lem:mainideaintersections} to the family consisting of the sets 
$ 
(A_i:i\in P_j)
$ 
together with the additional open set \(H_j\). The point \(a\) is a boundary point of \(H_j\), all relevant boundary points are isolated, and the complements have no isolated points.

Thus, after increasing \(n_0\) if necessary, for every \(n\ge n_0\) we may choose a nonzero \(t_{n,j}<0\), sufficiently small, such that \(a\) is not a boundary point of
\[
V_{n,j}\cap(H_j+t_{n,j}).
\]
Since \(t_{n,j}<0\) is small, \(H_j+t_{n,j}\) contains a neighborhood of \(a\). Hence \(a\) is not a boundary point of \(V_{n,j}\).

Again, from \(V_n\sim U\) and \((a-\varepsilon,a)\cap U=\emptyset\), we get 
$
V_{n,j}\cap(a-\varepsilon,a)=\emptyset.
$ 
Therefore \(a\) cannot be an interior point of \(V_{n,j}\). Since \(a\notin\partial V_{n,j}\), it follows that \(a\) is an interior point of \(V_{n,j}^c\). Hence 
$ 
\alpha(V_{n,j})>a.
$ 

We have proved that, for all sufficiently large \(n\),
\[
\forall j\in\{1,\ldots,q\},\qquad \alpha(V_{n,j})>a.
\]
Therefore
\[
\alpha(V_n)=\min_{j\in\{1,\ldots,q\}}\alpha(V_{n,j})>a,
\]
contradicting \eqref{eq:maximalb}. This concludes the proof.
\end{proof}


\section{Proofs of main results}\label{sec:proofs}

Now, let us prove our main results. 
\begin{proof}
[Proof of Theorem \ref{thm:maincategorycaseone}]
 \ref{item:01firstthm} $\implies$ 
 \ref{item:02firstthm} $\implies$
 \ref{item:03firstthm}. They are clear. 

 \medskip
 
  \ref{item:03firstthm} $\implies$ \ref{item:01firstthm}. Suppose that $\mathcal{D}(A)$ is dense in a nonempty open set $U\subseteq X$. Hence we can fix a vector $x \in \mathcal{D}(A)\cap U$. Define $V:=U-x$, which is a neighborhood of $0$. 
  Since $\mathcal{D}(A)$ is a subgroup of $X$ by Lemma \ref{lem:DAsubgroup} and $x \in \mathcal{D}(A)$, it follows that $(\mathcal{D}(A)\cap U)-x \subseteq \mathcal{D}(A)$. Therefore $\mathcal{D}(A)$ is dense in $V$. 
 
 

  
  Pick $r_0>0$ such that 
  $B(0,r_0)\subseteq V$. Suppose also for the sake of contradiction that item \ref{item:01firstthm} fails, hence both $A\notin \mathscr{M}$ and $A^c:=X\setminus A\notin \mathscr{M}$. 
  Note that it is possible to assume without loss of generality that $A$ is open. It follows that $A^c$ is closed with nonempty interior.

Pick an interior point $x_1\in A^c$ and put
$$
 r_1:=\sup\{r>0:B(x_1,r)\subseteq A^c\}.
$$
Since $A$ is nonempty open and $x_1\in\operatorname{Int}(A^c)$, we have
$0<r_1<\infty$. Moreover $B(x_1,r_1)\subseteq A^c$. Choose
$\delta>0$ such that
$$
0<\delta<\min\left\{\frac12,\frac{r_0}{4r_1}\right\}.
$$
By the definition of $r_1$, there exists $y_\delta\in A$ such that
$$
 r_1\le \|y_\delta-x_1\|<r_1+\delta r_1.
$$
Set $\rho_\delta:=\|y_\delta-x_1\|$ and
$$
 z_\delta:=x_1+\frac{(1-\delta)r_1}{\rho_\delta}(y_\delta-x_1).
$$
Then
$$
 B(z_\delta,\delta r_1)\subseteq B(x_1,r_1)\subseteq A^c,
$$
and
$$
 \|y_\delta-z_\delta\|
 =
 \rho_\delta-(1-\delta)r_1
 <2\delta r_1<r_0.
$$
Choose $\varepsilon_\delta>0$ such that
$$
 B(y_\delta,\varepsilon_\delta)\subseteq A,
 \qquad
 0<\varepsilon_\delta<\frac{\delta r_1}{4},
$$
and also
$$
 \varepsilon_\delta<r_0-\|z_\delta-y_\delta\|.
$$
Since $\mathcal D(A)$ is dense in $B(0,r_0)$, there is
$$
 d_\delta\in\mathcal D(A)\cap B(z_\delta-y_\delta,\varepsilon_\delta).
$$
Then the nonempty open set $B(y_\delta,\varepsilon_\delta)+d_\delta$ is
contained in $A+d_\delta$. If $u=w+d_\delta$, with
$w\in B(y_\delta,\varepsilon_\delta)$, then
$$
 \|u-z_\delta\|
 \le \|w-y_\delta\|+
 \|d_\delta-(z_\delta-y_\delta)\|
 <2\varepsilon_\delta
 <\delta r_1.
$$
Thus
$$
B(y_\delta,\varepsilon_\delta)+d_\delta
\subseteq B(z_\delta,\delta r_1)
\subseteq A^c.
$$
Hence $(A+d_\delta)\setminus A$ contains a nonempty open set, contradicting
$d_\delta\in\mathcal D(A)$.

\medskip

Since one-dimensional spaces are linearly homeomorphic to the reals, it is possible to assume hereafter that $X=\mathbb{R}$. Recall that $\mathcal{D}(A)$ is a closed additive subgroup of $\mathbb{R}$ by Lemma \ref{lem:DAsubgroup} and Lemma \ref{lem:DAclosed}. 

\medskip

 \ref{item:02firstthm} $\implies$ 
  \ref{item:04firstthm} $\implies$ \ref{item:05firstthm}. They are clear. 

\medskip

\ref{item:05firstthm} $\implies$ 
  \ref{item:06firstthm}. Suppose that there exists a convergent injective sequence $(x_n)$ with values in $\mathcal{D}(A)$ and denote its limit by $\eta$. 
  It follows that $\eta \in \mathcal{D}(A)$  and $y_n:=x_n-\eta \in \mathcal{D}(A)$ for all $n \in \omega$. Since $(y_n: n\in \omega)$ is an injective sequence in $\mathcal{D}(A)$ with limit $0$, we obtain that $\mathcal{D}(A)\neq r\mathbb{Z}$ for all $r \in \mathbb{R}$. 
  
  \medskip

\ref{item:06firstthm} $\implies$ 
  \ref{item:02firstthm}. 
  This follows by Lemma \ref{lem:closedadditivesubgroup}. 
\end{proof}

\medskip

\begin{rmk}\label{rmk:noncomplete}
The proof above reveals that the first three equivalences in Theorem
\ref{thm:maincategorycaseone} hold for every normed vector space $X$ which is 
a Baire space. If $X$ is meager in itself, then $X$ and all its subsets are 
meager in $X$, and the characterization becomes trivial. Thus completeness is 
not used directly; what is used is the relevant Baire-category behavior of the 
ambient normed space.
\end{rmk}



\medskip

\begin{proof}
[Proof of Corollary \ref{cor:onedimensional}]
  \ref{item:01firstthmcor} $\implies$ 
  \ref{item:02firstthmcor} $\implies$ \ref{item:03firstthmcor}. They are clear. 
  
  \medskip

  \ref{item:03firstthmcor} $\implies$ \ref{item:04firstthmcor}. It follows from the fact that every bounded sequence has a convergent subsequence. 

  \medskip

  \ref{item:04firstthmcor} $\implies$ 
  \ref{item:06firstthmcor}. 
Suppose that there exists an injective sequence $(s_n)$ with values in 
$\mathcal{D}(A)$ such that 
$\inf\{|s_n-s_k|: n,k \in \omega, n\neq k\}=0$. We claim that there is no 
$r>0$ such that $s_n/r$ is integer for all $n\in\omega$. Indeed, otherwise 
$s_n\in r\mathbb{Z}$ for all $n\in\omega$. Since $(s_n)$ is injective, it 
follows that $s_n-s_k\in r\mathbb{Z}\setminus\{0\}$ for all distinct 
$n,k\in\omega$. Hence $|s_n-s_k|\ge r$ for all distinct $n,k\in\omega$, 
which contradicts 
$\inf\{|s_n-s_k|: n,k \in \omega, n\neq k\}=0$.

\medskip

\ref{item:06firstthmcor} $\implies$ 
  \ref{item:07firstthmcor}. Recall again that $\mathcal{D}(A)$ is a closed additive subgroup of $\mathbb{R}$ by Lemma \ref{lem:DAsubgroup} and Lemma \ref{lem:DAclosed}. Hence, thanks to Lemma \ref{lem:closedadditivesubgroup}, either $\mathcal{D}(A)=\mathbb{R}$ or $\mathcal{D}(A)=r\mathbb{Z}$ for some $r\in \mathbb{R}$.
  Since $\mathcal{D}(A)$ contains each $s_n$ for all $n \in\omega$ and there is no $r>0$ such that $s_n/r$ is an integer for all $n$, it follows that $\mathcal{D}(A)\neq r\mathbb{Z}$ for all $r \in \mathbb{R}$. Therefore $\mathcal{D}(A)=\mathbb{R}$. Choosing any injective dense sequence $(t_n)$ in $\mathbb{R}$, we have $t_n\in \mathcal{D}(A)$ for all $n$, and hence $(t_n)$ satisfies \eqref{eq:AsimAplusxnforalln} and item \ref{item:07firstthmcor}. 

\medskip

\ref{item:07firstthmcor} $\implies$ 
  \ref{item:01firstthmcor}. If $(s_n)$ is as in item \ref{item:07firstthmcor}, then $\mathcal{D}(A)$ contains the dense set $\{s_n:n\in\omega\}$. Since $\mathcal{D}(A)$ is closed by Lemma \ref{lem:DAclosed}, we have $\mathcal{D}(A)=\mathbb{R}$. The conclusion follows by Theorem \ref{thm:maincategorycaseone}. 
\end{proof}

\medskip

\begin{proof}
[Proof of Corollary \ref{cor:Qlinearlyindependent}]
Note that $x,y$ are nonzero and distinct. 
Fix $r>0$ and suppose that $a:=x/r$ and $b:=y/r$ are integers. Then $a,b\neq 0$ and $ay-bx=0$. This contradicts the hypothesis that $\{x,y\}$ is linearly independent over $\mathbb{Q}$. Hence $\mathcal{D}(A)\neq r\mathbb{Z}$ for all $r \in \mathbb{R}$. The conclusion follows by Corollary \ref{cor:onedimensional}.
\end{proof}

\medskip

\begin{proof}[Proof of Theorem \ref{thm:distinctFS}]
\ref{item:1FS} $\implies$ \ref{item:2FS}. It follows by Remark \ref{rmk:atleastonemeagercomeager}.

\medskip

\ref{item:2FS} $\implies$ \ref{item:3FS} 
$\implies$ \ref{item:4FS}. They are obvious. 

\medskip

\ref{item:4FS} $\implies$ \ref{item:1FS}. 
Let 
$$
L:=\{i \in \{1,\ldots,k\}: A_i\not\sim \emptyset \text{ and } A_i\not\sim \mathbb{R}\}.
$$
Suppose, for the sake of contradiction, that $L\neq \emptyset$. By item \ref{item:4FS}, there exists an injective sequence $(x_n)$ with values in 
$\mathcal{D}_{\alpha_1,\ldots,\alpha_k}(A_1,\ldots,A_k)$ such that 
$\lim_n x_n=(0,\ldots,0)$ and $x_{n,i}\neq 0$ for all $n \in \omega$ and all $i \in \{1,\ldots,k\}$.

Since $A_i$ is either meager or comeager for each $i\notin L$, we have
$$
\forall n \in \omega,\quad 
\sum_{i \in L}\alpha_i\left(\bm{1}_{A_i+x_{n,i}}-\bm{1}_{A_i}\right)=0
\quad \bmod{\mathscr{M}}.
$$
Moreover, all finite sums of $(\alpha_i:i\in L)$ are distinct. Fix a subset $I\subseteq L$ and define $(\varepsilon_i:i\in L)\in \{0,1\}^L$ by $\varepsilon_i=1$ if and only if $i\in I$. By the maximality of the finite sums, the last displayed identity implies that
$$
\forall n \in \omega,\quad 
\bigcap_{i \in L}(A_i^{\varepsilon_i}+x_{n,i})
=
\bigcap_{i \in L}(A_i+x_{n,i})^{\varepsilon_i}
\sim
\bigcap_{i \in L}A_i^{\varepsilon_i}.
$$
Applying Lemma \ref{lem:twodimensionalcase} to the family $(A_i^{\varepsilon_i}:i\in L)$, we obtain that 
$\bigcap_{i \in L}A_i^{\varepsilon_i}$ is meager. Since $I\subseteq L$ was arbitrary, it follows that
$$
\mathbb{R}
=
\bigcup_{I\subseteq L}\bigcap_{i \in L}A_i^{\varepsilon_i}
$$
is meager, a contradiction. Therefore $L=\emptyset$, that is, each $A_i$ is either meager or comeager.
\end{proof}

\medskip

\begin{proof}
    [Proof of Theorem \ref{thm:twodimensional}]
    \ref{item1twodimensional} $\implies$ \ref{item2twodimensional}. 
    Since neither $B$ nor $B^c$ are meager, it follows by Theorem \ref{thm:maincategorycaseone} that there exists $r \in \mathbb{R}$ such that $\mathcal{D}(B)=\{rz: z \in \mathbb{Z}\}$. 
    Now, suppose that $\{A,B\}$ is a partition of $\mathbb{R}$ modulo $\mathscr{M}$. Fix $x \in \mathbb{R}$ and note that 
\begin{displaymath}
    \begin{split}
\bm{1}_{A+x}+\bm{1}_{B+x}=\bm{1}_{\mathbb{R}}=\bm{1}_A+\bm{1}_B=\bm{1}_{A+x}+\bm{1}_{B+y} \bmod{\mathscr{M}}
\end{split}
\end{displaymath}
for each $y \in \mathbb{R}$ with $(x,y) \in \mathcal{D}(A,B)$. Hence, $(x,x) \in \mathcal{D}(A,B)$ and, in addition, $(x,y) \in \mathcal{D}(A,B)$ if and only if $y-x \in \mathcal{D}(B)$ if and only if $y-x=rz$ for some $z \in \mathbb{Z}$. 

\medskip

 \ref{item2twodimensional} $\implies$  \ref{item2Btwodimensional} $\implies$ \ref{item3twodimensional}. They are obvious. 

 \medskip

  \ref{item3twodimensional} $\implies$ \ref{item1twodimensional}. 
By hypothesis there exists an injective sequence $(x_n)$ with values in $\mathcal{D}(A,B)$ which is convergent to $0$, with $x_n:=(x_{n,1}, x_{n,2})$ for all $n \in \omega$. In the case that there exist $i \in \{1,2\}$ and infinitely many $n\in \omega$ such that $x_{n,i}=0$, we would conclude by Theorem \ref{thm:maincategorycaseone} that at least one among $A,A^c,B,B^c$ would be meager. Hence we can suppose without loss of generality that $x_{n,i}\neq 0$ for all $i \in \{1,2\}$ and $n \in \omega$. Applying Lemma \ref{lem:twodimensionalcase} to $(A_1,A_2)=(A,B)$, we conclude that $A\cap B\sim \emptyset$. In addition, the identity $\bm{1}_{A+x_{n,1}}+\bm{1}_{B+x_{n,2}}
=\bm{1}_A+\bm{1}_B \bmod{\mathscr{M}}$ for each $n \in \omega$ 
implies that 
$$
(A^c+x_{n,1}) \cap (B^c+x_{n,2})
=A^c \cap B^c \bmod{\mathscr{M}}. 
$$
Hence, applying Lemma \ref{lem:twodimensionalcase} to $(A_1,A_2)=(A^c,B^c)$, we conclude that $A^c\cap B^c\sim \emptyset$. Therefore $\{A,B\}$ is a partition of $\mathbb{R}$ modulo $\mathscr{M}$. 
\end{proof}

\medskip

\begin{proof}[Proof of Theorem \ref{thm:generalcase}] We proceed similarly as in the proof of Theorem \ref{thm:twodimensional}. 

\medskip

\ref{item:1general} $\implies$ \ref{item:2general}. Fix 
the integer $h$ as in item \ref{item:1general}. Then 
\begin{displaymath}
    \begin{split}
\sum_{i=1}^k \bm{1}_{A_i}
&=\sum_{i=1}^k \sum_{\substack{I\subseteq \{1,\ldots,k\}: \\ i \in I,\, |I|=h} } \bm{1}_{\bigcap_{j \in I}A_j}\\
&=h \sum_{\substack{I\subseteq \{1,\ldots,k\}: \\ |I|=h} } \bm{1}_{\bigcap_{j \in I}A_j}=h \,\bmod{\mathscr{M}}.
\end{split}
\end{displaymath}


\medskip

\ref{item:2general} $\implies$ \ref{item:3general}. Fix $x_1\in \mathbb{R}$. Then 
$$
\sum_{i=1}^k \bm{1}_{A_i+x_1}=h=\sum_{i=1}^k \bm{1}_{A_i}=\sum_{i=1}^k \bm{1}_{A_i+x_i}\,\,\bmod{\mathscr{M}}
$$
for all $(x_2,\ldots,x_k) \in \mathbb{R}^{k-1}$ such that $(x_1,\ldots,x_k) \in \mathcal{D}(A_1,\ldots,A_k)$. It follows that $\sum_{i=2}^k (\bm{1}_{A_i+x_1}-\bm{1}_{A_i+x_i})=0$ modulo meager sets, hence
    $$
    \sum_{i=2}^k \bm{1}_{A_i}=\sum_{i=2}^k\bm{1}_{A_i+(x_i-x_1)}\,\,\bmod{\mathscr{M}}.
    $$
Therefore $(x_2-x_1,\ldots,x_k-x_1) \in \mathcal{D}(A_2,\ldots,A_k)$. 

\medskip

\ref{item:3general} $\implies$ \ref{item:5general} $\implies$ \ref{item:6general} $\implies$ \ref{item:4general}. They are obvious. 

\medskip

\ref{item:4general} $\implies$ \ref{item:1general}. 
For each $n\in \omega$, choose 
$x_n=(x_{n,1},\ldots,x_{n,k})\in \mathcal{D}(A_1,\ldots,A_k)$ such that 
$\|x_n\|<1/(n+1)$ and $x_{n,i}\neq 0$ for all $i\in \{1,\ldots,k\}$. Passing to a subsequence if necessary, we may suppose that $(x_n)$ is injective. For each $S\subseteq \mathbb{R}$, let us write $S^1:=S$ and $S^0:=S^c$. For each $j \in \{0,1,\ldots,k\}$, define 
$$
\mathscr{A}_j:=\left\{A_1^{\varepsilon_1} \cap \cdots \cap A_k^{\varepsilon_k}: \varepsilon \in \{0,1\}^k \text{ and }\sum_{i=1}^k \varepsilon_i=j\right\},
$$
and set 
$$
U_j:=\bigcup \mathscr{A}_j.
$$

We claim that each $U_j$ is either meager or comeager. Fix $j \in \{0,1,\ldots,k\}$ and enumerate
$$
\mathscr{A}_j=\{B_1,\ldots,B_q\},
$$
where
$$
B_t=A_1^{\varepsilon^t_1}\cap \cdots \cap A_k^{\varepsilon^t_k}
\quad\text{and}\quad
\sum_{i=1}^k \varepsilon_i^t=j
$$
for each $t\in \{1,\ldots,q\}$. For each pair $(t,i)$, put
$$
C_{t,i}:=A_i^{\varepsilon_i^t}.
$$
After reindexing the family $(C_{t,i}:1\le t\le q,\,1\le i\le k)$ as $C_1,\ldots,C_p$, where $p:=kq$, let $P_t$ be the set of indices corresponding to the block $(C_{t,1},\ldots,C_{t,k})$. Moreover, define
$$
y_{n,t,i}:=x_{n,i}
$$
for all $n\in \omega$, $t\in \{1,\ldots,q\}$, and $i\in \{1,\ldots,k\}$.

Since $x_n\in \mathcal{D}(A_1,\ldots,A_k)$, we have
$$
\sum_{i=1}^k \bm{1}_{A_i+x_{n,i}}
=
\sum_{i=1}^k \bm{1}_{A_i}
\quad \bmod{\mathscr{M}}.
$$
Therefore
\begin{displaymath}
\begin{split}
\bigcup_{t=1}^q \bigcap_{i=1}^k(C_{t,i}+y_{n,t,i})
&=
\bigcup_{t=1}^q \bigcap_{i=1}^k(A_i^{\varepsilon_i^t}+x_{n,i})\\
&=
\left\{z\in \mathbb{R}: \sum_{i=1}^k \bm{1}_{A_i+x_{n,i}}(z)=j\right\}\\
&\sim
\left\{z\in \mathbb{R}: \sum_{i=1}^k \bm{1}_{A_i}(z)=j\right\}\\
&=
\bigcup_{t=1}^q \bigcap_{i=1}^k A_i^{\varepsilon_i^t}
=
U_j.
\end{split}
\end{displaymath}
Each set $C_{t,i}$ has the Baire property, is neither meager nor comeager, and satisfies $(\partial C_{t,i})^\prime=\emptyset$. Hence Proposition \ref{prop:generaldimensionalcasenew} applies and gives that $U_j$ is either meager or comeager.

Since $U_0,\ldots,U_k$ are pairwise disjoint and cover $\mathbb{R}$, there exists exactly one $h\in \{0,\ldots,k\}$ such that $U_h$ is comeager, while $U_j$ is meager for all $j\neq h$. Moreover, $h\notin \{0,k\}$: if $h=0$, then $A_i^c$ is comeager for each $i$, and if $h=k$, then $A_i$ is comeager for each $i$, contradicting the standing assumption that each $A_i$ is neither meager nor comeager. Thus $h\in \{1,\ldots,k-1\}$.

Finally, define
$$
\mathscr{B}:=\left\{\bigcap_{i \in I} A_i: I\subseteq \{1,\ldots,k\} \text{ and }|I|=h\right\}. 
$$
We show that $\mathscr{B}$ is a partition of $\mathbb{R}$ modulo $\mathscr{M}$. First, since $U_h$ is comeager and $U_h\subseteq \bigcup \mathscr{B}$, the union $\bigcup \mathscr{B}$ is comeager. Second, if $I,J\subseteq \{1,\ldots,k\}$ are distinct sets of cardinality $h$, then
$$
\left(\bigcap_{i\in I}A_i\right)\cap \left(\bigcap_{j\in J}A_j\right)
\subseteq \bigcup_{\ell=h+1}^k U_\ell,
$$
which is meager. Therefore $\mathscr{B}$ is a partition of $\mathbb{R}$ modulo $\mathscr{M}$, as required.

\medskip

The last sentence follows by the equivalence of items \ref{item:4general}, \ref{item:5general}, and \ref{item:6general}.
\end{proof}

\medskip

\begin{proof}
    [Proof of Proposition \ref{prop:equivalentE1E5}] 
    The proof of the implications 
    $\ref{item:1general}$ 
    $\implies \ref{item:2general}$ 
    $\implies \ref{item:3general}$ 
    $\implies \ref{item:5general}$ 
    goes exactly as in Theorem \ref{thm:generalcase}. 

    \medskip

    $\ref{item:5general}\implies \ref{item:2general}$.  
Define the map $f: \mathbb{R} \to \{0,1,\ldots,k\}$ by $$
\forall x \in \mathbb{R}, \quad f(x):=\sum_{i=1}^k \bm{1}_{A_i}(x),
$$
and set $E_m:=f^{-1}(\{m\})$ for each $m \in \{0,1,\ldots,k\}$. Of course, each $E_m$ has the Baire property. Since $\Delta_k \subseteq \mathcal{D}(A_1,\ldots,A_k)$, we get $\sum_{i=1}^k(\bm{1}_{A_i+y}-\bm{1}_{A_i})=0 \ \bmod{\mathscr{M}}$, that is, $f(x-y)=f(x)\bmod{\mathscr{M}}$ for all $x,y \in \mathbb{R}$. 
For every $m \in \{0,1,\ldots,k\}$ and $y \in \mathbb{R}$ we have
\[
(E_m+y)\bigtriangleup E_m
\subseteq 
\{x\in\mathbb{R}: f(x-y)\neq f(x)\}\in\mathscr{M},
\]
so that $E_m+y\sim E_m$. It follows that $\mathcal{D}(E_m)=\mathbb{R}$ for every $m$. By Theorem \ref{thm:maincategorycaseone}, each $E_m$ is either meager or comeager. Since $\{E_0,E_1,\ldots,E_k\}$ is a partition of $\mathbb{R}$, exactly one of them is comeager. Thus there exists $h\in\{0,1,\ldots,k\}$ such that 
$$
f=h \ \bmod{\mathscr{M}}.
$$ 
Because each $A_i$ is neither meager nor comeager, we cannot have $h\in\{0,k\}$ (otherwise all $A_i$ would be meager or all comeager). Therefore $h\in\{1,2,\ldots,k-1\}$.

\medskip

    $\ref{item:2general}\implies \ref{item:1general}$. 
    Pick $h\in\{1,2,\ldots,k-1\}$ such that $f=h \ \bmod{\mathscr{M}}$. 
    Define
\[
\mathscr{B}:=\left\{\,\bigcap_{i\in I}A_i:\ I\subseteq\{1,\ldots,k\}\text{ and }|I|=h\right\}.
\]
Fix $x\in E_h$ and set $I_x:=\{i\in\{1,\ldots,k\}: x\in A_i\}$. Then $|I_x|=h$ and $x\in\bigcap_{i\in I_x}A_i$. Hence $E_h\subseteq \bigcup \mathscr{B}$, so $\bigcup \mathscr{B}$ is comeager.

Now take distinct $I,J\subseteq\{1,\ldots,k\}$ with $|I|=|J|=h$. If $x\in \bigcap_{t\in I\cup J}A_t$, then $f(x)\ge |I\cup J|\ge h+1$. It follows that 
\[
\bigcap_{t\in I\cup J}A_t
\subseteq 
\{x\in\mathbb{R}: f(x)\neq h\}
\in\mathscr{M}.
\]
Thus the elements of $\mathscr{B}$ are pairwise disjoint modulo $\mathscr{M}$ and their union is comeager, i.e.,  $\mathscr{B}$ is a partition of $\mathbb{R}$ modulo $\mathscr{M}$. This completes the proof. 
\end{proof}

\bibliographystyle{amsplain}
\bibliography{ideale}

\end{document}